\documentclass{amsart}
\usepackage{amsmath, amsthm, amssymb}
\usepackage{graphics}
\usepackage{pinlabel}
\usepackage{color}
\usepackage[colorlinks,citecolor=blue,linkcolor=red]{hyperref}

\usepackage[margin=1.5in]{geometry}
\setlength{\textwidth}{14cm}

\DeclareMathOperator{\Map}{Map}
\DeclareMathOperator{\PMap}{PMap}

\DeclareMathOperator{\End}{End}
\DeclareMathOperator{\Aut}{Aut}
\DeclareMathOperator{\Homeo}{Homeo}
\DeclareMathOperator{\Comm}{Comm}

\begin{document}

\newtheorem{theorem}{Theorem}[section]
\newtheorem{lemma}[theorem]{Lemma}
\newtheorem{corollary}[theorem]{Corollary}
\newtheorem{proposition}[theorem]{Proposition}
\newtheorem*{proposition*}{Proposition}
\newtheorem{question}[theorem]{Question}
\newtheorem*{answer}{Answer}
\newtheorem{problem}[theorem]{Problem}
\newtheorem*{claim}{Claim}
\newtheorem*{criterion}{Criterion}
\theoremstyle{definition}
\newtheorem{definition}[theorem]{Definition}
\newtheorem{remark}[theorem]{Remark}
\newtheorem{example}[theorem]{Example}
\newtheorem*{case}{Case}

\def\Id{\text{Id}}
\def\H{\mathbb H}
\def\Z{\mathbb Z}
\def\N{\mathbb N}
\def\R{\mathbb R}
\def\C{\mathbb C}
\def\CP{{\mathbb {CP}}}
\def\CC{\mathcal C}
\def\M{\mathcal M}
\def\F{\mathcal F}
\def\J{\mathcal J}
\def\HC{\mathcal H}
\def\P{\mathcal P}
\def\Sph{\mathbb S}
\def\P{\mathcal P}
\def\Q{\mathbb Q}
\def\L{\mathcal L}
\def\A{\mathcal A}
\def\E{\mathcal E}
\def\homeo{\textnormal{Homeo}}
\def\inte{\textnormal{int}}
\def\scl{\textnormal{scl}}
\def\link{\textnormal{link}}
\def\Out{\textnormal{Out}}
\def\TNF{\textnormal{TNF}}
\def\Cyc{\textnormal{Cyc}}
\def\supp{\textnormal{supp}}

\newcommand{\define}[1]{\emph{#1}}

\title{Isomorphisms between big mapping class groups}
\author{Juliette Bavard}
\address{Univ Rennes, CNRS, IRMAR - UMR 6625, F-35000 Rennes, France}
\email{juliette.bavard@univ-rennes1.fr}

\author{Spencer Dowdall}
\address{Department of Mathematics, Vanderbilt University}
\email{spencer.dowdall@vanderbilt.edu}

\author{Kasra Rafi}
\address{Department of Mathematics, University of Toronto}
\email{rafi@math.toronto.edu }

\begin{abstract} We show that any isomorphism between mapping class groups of 
orientable infinite-type surfaces is induced by a homeomorphism between the surfaces. 
Our argument additionally applies to automorphisms between finite-index subgroups of 
these `big' mapping class groups and shows that each finite-index subgroup has finite outer 
automorphism group. As a key ingredient, we prove that all simplicial automorphisms 
between curve complexes of infinite-type orientable surfaces are induced by 
homeomorphisms.
\end{abstract}

\maketitle

\section{Introduction}
\label{sec:intro}

All \define{surfaces} in this paper will be connected, orientable, and without boundary. A surface $S$ is said to be of \define{finite-type} if its fundamental group is finitely generated; otherwise $S$ has \define{infinite-type}.
The \define{(extended) mapping class group} of $S$ is the group $\Map(S)$ of isotopy classes of possibly orientation-reversing homeomorphisms of $S$. 
An \define{end} of $S$ is a nested choice of connected components of $S\setminus K_i$ for some compact exhaustion $K_1\subset K_2\subset\dotsb$ of $S$. More formally, the set of ends is the inverse limit $\End(S) = \varprojlim \pi_0(S\setminus K)$ over the directed (via inclusion) system of compact subsets $K$ of $S$. 
The \define{pure mapping class group} is the subgroup $\PMap(S)\le\Map(S)$ that fixes $\End(S)$ pointwise. 
We also have the index $2$ subgroups $\PMap^+\!(S)$ and $\Map^+\!(S)$ consisting of orientation-preserving elements.

In the case of finite-type surfaces, an old result of Ivanov \cite{Ivanov88-TeichModular} shows that the automorphism group of $\Map(S)$ is isomorphic to $\Map(S)$ itself; the closed case being independently obtained by McCarthy \cite{McCarthy86}. It is a related folk-theorem (implicit in \cite{Ivanov88-TeichModular} and following in most cases from \cite{BirmanLubotzkyMcCarthy} and \cite{Harer}) that, aside from low-complexity exceptions, non-homeomorphic finite-type surfaces cannot have isomorphic mapping class groups; for a full discussion and proof see \cite[Appendix~A]{Rafi-Schleimer}. Thus the group $\Map(S)$ determines the surface $S$ when $S$ has finite-type.

Here we focus on the so-called `big' mapping class groups, that is, groups $\Map(S)$ and $\PMap(S)$ for $S$ of infinite-type. Unlike mapping class groups of finite-type surfaces, these big mapping class groups have uncountably many elements and inherit a non discrete topology from the compact open topology on $\Homeo(S)$. Despite a recent growing interest in big mapping class groups (e.g., \cite{Calegari-blog,AramayonaFossasParlier,DurhamFanoniVlamis,Patel-Vlamis,Hernandez-Morales-Valdez2}), the above properties have remained open in this setting. Our main results establish them for all infinite-type surfaces. 

\begin{theorem}
\label{thm:distinguish}
Let $S_1$ and $S_2$ be infinite-type surfaces.
For $i=1,2$, let $G_i$ be a finite-index subgroup of either $\Map(S_i)$ or $\PMap(S_i)$ and let $\Phi\colon G_1\to G_2$ be any algebraic isomorphism. Then there is a homeomorphism $h\colon S_1\to S_2$ so that $\Phi(f) = h\circ f\circ h^{-1}$. In particular, $\Phi$ is automatically continuous.
\end{theorem}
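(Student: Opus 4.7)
Following Ivanov's classical approach in the finite-type setting, my plan is to reduce the algebraic rigidity of $\Phi$ to the combinatorial rigidity of the curve complex, which is advertised in the abstract as a key ingredient proved elsewhere in the paper. The overall architecture is: first, produce from $\Phi$ an induced simplicial isomorphism $\Phi_*\colon \CC(S_1)\to \CC(S_2)$; second, promote $\Phi_*$ to a homeomorphism $h\colon S_1\to S_2$ via that curve-complex rigidity theorem; third, verify $\Phi(f)=hfh^{-1}$ for all $f\in G_1$, which also yields automatic continuity for free.

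The crux of the first step is to give an algebraic characterization of the set of powers of Dehn twists lying in $G_1$. In Ivanov's finite-type argument, Dehn twists are recognized via their centralizers: modulo its cyclic center, the centralizer of $T_\alpha^n$ is a product of mapping class groups of components of $S\setminus\alpha$. I would attempt an analogous characterization here --- singling out those elements of $G_1$ whose centralizer admits a virtual splitting as a product of two commuting factors with a distinguished infinite cyclic piece --- and then use it to define $\Phi_*(\alpha)$ as the unique curve $\beta$ for which $\Phi(T_\alpha^n)=T_\beta^m$ for some nonzero integers $n,m$. Simpliciality of $\Phi_*$ is automatic from the fact that disjointness of two curves is equivalent to commutation of appropriate twist powers, a purely algebraic condition preserved by $\Phi$.

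For the second step, the curve-complex rigidity theorem directly supplies a homeomorphism $h\colon S_1\to S_2$ inducing $\Phi_*$. For the third, consider $\Psi := \mathrm{conj}_{h^{-1}}\circ\Phi\colon G_1\to \Map(S_1)$; by construction $\Psi(T_\alpha^n)$ is a power of $T_\alpha$ for every simple closed curve $\alpha$, so $\Psi$ commutes up to powers with every twist power in $G_1$. An Alexander-method style argument adapted to infinite-type surfaces should then show that any mapping class with this property is trivial, giving $\Phi(f)=hfh^{-1}$; continuity follows immediately since conjugation by a fixed homeomorphism is continuous in the compact-open topology.

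The main obstacle is the algebraic characterization of Dehn twist powers inside an arbitrary finite-index subgroup. Several complications are absent in finite type: components of $S_1\setminus\alpha$ may themselves be of infinite type, so the usual centralizer analysis does not bottom out in a tractable inductive base case; $G_1$ need not contain $T_\alpha$ itself, so one must work with twist powers and algebraic root relations throughout; and big mapping class groups contain non-compactly-supported elements such as handle shifts whose centralizers may superficially resemble those of Dehn twists. Overcoming the last difficulty likely requires an intrinsic, end-sensitive algebraic invariant --- perhaps distinguishing separating from non-separating curves, or exploiting the behavior of centralizers under further passage to finite-index subgroups of $G_1$ --- to rule out these pseudo-twist elements.
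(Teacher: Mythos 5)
Your overall architecture is exactly the paper's (and Ivanov's): algebraically characterize twist powers, use commutation-equals-disjointness to induce a simplicial isomorphism $\CC(S_1)\to\CC(S_2)$, invoke Theorem~\ref{thm:curvecomplex} to get $h$, and then verify $\Phi(f)=h\circ f\circ h^{-1}$ via the conjugation relation $fT_\alpha f^{-1}=T_{f(\alpha)}^{\sigma(f)}$ together with the infinite-type Alexander method (Theorem~\ref{thm:alexander}). That last verification step is essentially correct as you sketch it. However, the proposal has a genuine gap at precisely the point you flag as ``the main obstacle'': you do not actually produce an algebraic characterization of twist powers that works in the infinite-type setting, and the centralizer-splitting characterization you propose to adapt from Ivanov does not obviously survive the passage to infinite type, for the reasons you yourself list (infinite-type complementary components, and non-finitely-supported elements such as handle shifts and more exotic mapping classes whose centralizers can mimic those of twists).

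The paper's resolution of this obstacle is a new idea that your proposal does not contain: finitely supported elements of $G$ are characterized as exactly those whose conjugacy class \emph{in $G$} is countable (Proposition~\ref{proposition:characterization of finitely supported elements}). The proof that non-finitely-supported elements have uncountable conjugacy classes proceeds by finding infinitely many disjoint curves $a_i$ with $f(a_i)\ne a_i$ and conjugating $f$ by the uncountably many infinite products $\prod_i T_{a_i}^{\pm k_i}$. This gives the algebraically defined subset $\F_G$, and twists are then singled out \emph{within} $\F_G$ by requiring that $Z(\F_G\cap C_G(f))$ be infinite cyclic, that $C_G(f)=C_G(f^k)$ for all $k$, and that the commuting set $(\M_G)_f$ be maximal (Definition~\ref{def:gen-twist}, Proposition~\ref{propr:char_annular}); the intersection with $\F_G$ is what neutralizes the infinite-type complementary components and the pseudo-twist elements. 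Two further details your sketch omits: the paper must use half-twists rather than Dehn twists about pants curves so that the generator of the cyclic center is correctly normalized, and Lemma~\ref{lem:canonical_curves} (nontriviality of the canonical reducing system $\partial f$ for finitely supported $f$, which requires a torsion-freeness argument for surface braid groups) is needed to locate the supporting multicurve. Without Proposition~\ref{proposition:characterization of finitely supported elements} or a substitute for it, the first step of your plan cannot be completed, so the proposal as written is not a proof.
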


Thus mapping class groups---and even their finite-index subgroups---distinguish infinite-type surfaces. 
This answers Question 1.1 and generalizes Theorem 1 in the recent paper of Patel and Vlamis \cite{Patel-Vlamis}, who treat the special case of $\PMap$ for infinite-type surfaces of finite genus at least $4$.

The \define{abstract commensurator} of $G$ is the group $\Comm(G)$ of all equivalence classes of isomorphisms $H_1\to H_2$ between finite-index subgroups of $G$, where two such isomorphisms are equivalent if they agree on a finite-index subgroup. There are natural maps $G\to \Aut(G)\to \Comm(G)$ arising from the fact that every conjugation or automorphism of $G$ is itself a commensuration. However $\Comm(G)$ is in general much larger than $\Aut(G)$; for example $\Aut(\Z) \cong \Z/2\Z$ whereas $\Comm(\Z) \cong \Q^*$ is not even finitely generated. We view $\Comm(G)$ as capturing the  `hidden' symmetries of $G$; an assertion that $\Comm(G)$ is small thus conveys a strong \emph{algebraic rigidity} that is reminiscent of superrigidity for lattices $\Gamma$ in a semisimple Lie group $G\ne\mathrm{PSL}(2,\R)$. Indeed, here work of Margulis, Mostow and Prasad (see \cite{Margulis,Zimmer}) implies that $[\Comm(\Gamma):\Gamma]<\infty$ when $\Gamma$ is nonarithmetic and that $\Comm(\Gamma)$ virtually embeds into $G$ when $\Gamma$ is arithmetic. Theorem~\ref{thm:distinguish} implies this strong algebraic rigidity for $\Map(S)$, generalizing Ivanov's result computing $\Comm(\Map(S))$ for finite-type surfaces \cite{Ivanov97-IRMN}, as well as the following consequences which, in particular, establish Conjecture 1.2 of \cite{Patel-Vlamis}.

\begin{corollary}
\label{cor:autmap}
Let $S$ be an infinite-type surface. Then
\begin{enumerate}\renewcommand{\theenumi}{\roman{enumi}}
\item\label{maincor-Aut} The natural maps $\Map(S)\to \Aut(\Map(S))\to \Comm(\Map(S))$ are isomorphisms.
\item\label{maincor-characteristic} $\PMap(S)$, $\Map^+(S)$, and $\PMap^+(S)$ are characteristic in $\Map(S)$.
\item\label{maincor-out} $\Out(G)$ is finite for every finite-index subgroup of $\Map(S)$.
\item\label{maincor-finiteindex} Finite-index subgroups of $\Map(S)$ or $\PMap(S)$ are isomorphic iff they are conjugate.
\end{enumerate}
\end{corollary}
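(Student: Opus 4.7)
The plan is to derive all four parts of Corollary~\ref{cor:autmap} as formal consequences of Theorem~\ref{thm:distinguish}. The only nontrivial additional ingredient is centerless-ness of $\Map(S)$ and of its finite-index subgroups, which Theorem~\ref{thm:distinguish} does not state directly. I would establish this upfront via a standard Dehn twist argument: if $[h]\in\Map(S)$ centralizes a finite-index subgroup $G$, then $[h]$ commutes with appropriate powers of Dehn twists along a cofinal collection of simple closed curves (powers being used to land inside $G$), forcing $[h]$ to preserve every isotopy class of simple closed curve and hence to be trivial.

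For part~(\ref{maincor-Aut}), applying Theorem~\ref{thm:distinguish} with $S_1=S_2=S$ and $G_1=G_2=\Map(S)$ shows every automorphism of $\Map(S)$ is conjugation by some $[h]\in\Map(S)$, so the map $\Map(S)\to\Aut(\Map(S))$ is surjective; the centerless property gives injectivity. For $\Aut(\Map(S))\to\Comm(\Map(S))$, surjectivity follows by applying Theorem~\ref{thm:distinguish} to an arbitrary commensuration $H_1\to H_2$: the homeomorphism it provides defines a global automorphism of $\Map(S)$ that agrees with the commensuration on the finite-index subgroup $H_1$, so both represent the same commensuration class. Injectivity is centerless-ness applied to a finite-index subgroup, since two automorphisms of $\Map(S)$ agreeing on a finite-index subgroup differ by an automorphism centralizing that subgroup.

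Part~(\ref{maincor-characteristic}) then follows from~(\ref{maincor-Aut}) by direct check on $[h]\in\Map(S)$: conjugation preserves orientation-preservation, and for $f\in\PMap(S)$ the conjugate $hfh^{-1}$ acts on $\End(S)$ as $h|_{\End(S)}\circ\mathrm{Id}\circ h|_{\End(S)}^{-1}=\mathrm{Id}$, so $\Map^+(S)$, $\PMap(S)$, and $\PMap^+(S)=\PMap(S)\cap\Map^+(S)$ are all characteristic. For part~(\ref{maincor-out}), Theorem~\ref{thm:distinguish} yields a surjective homomorphism $N_{\Map(S)}(G)\twoheadrightarrow\Aut(G)$ by conjugation, with kernel $C:=C_{\Map(S)}(G)$; the image of $\mathrm{Inn}(G)$ in $N_{\Map(S)}(G)/C$ is $GC/C$, whence $\Out(G)\cong N_{\Map(S)}(G)/(GC)$ is a quotient of $N_{\Map(S)}(G)/G$, which is finite because $G$ has finite index in $\Map(S)$. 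Part~(\ref{maincor-finiteindex}) is then immediate: by Theorem~\ref{thm:distinguish} any isomorphism $G_1\to G_2$ is conjugation by a homeomorphism $h$, so $G_2=[h]G_1[h]^{-1}$.

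The only real obstacle I anticipate is the preliminary centerless-ness claim for finite-index subgroups of $\Map(S)$, since Theorem~\ref{thm:distinguish} does not directly provide it; the rest is bookkeeping built on the theorem.
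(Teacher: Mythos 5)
Your proposal is correct and follows essentially the same route as the paper: the preliminary ``centerlessness'' step you flag as the only real obstacle is exactly the paper's kernel computation for $\hat{\iota}\circ\iota$ (commuting with $T_\alpha^n\in G$ forces $T_{f(\alpha)}^{n\sigma(f)}=T_\alpha^n$, hence $f$ fixes every curve and is trivial by Lemma~\ref{lem:twists}(\ref{twist-equal}) and Theorem~\ref{thm:alexander}), and the remaining parts are the same bookkeeping via Theorem~\ref{thm:distinguish} and the surjection $N(G)/G\to\Out(G)$.
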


Our proof of Theorem~\ref{thm:distinguish} follows Ivanov's approach \cite{Ivanov97-IRMN} and has two main ingredients. The first is an algebraic characterization of Dehn twists in terms of centralizers of elements (see \S\ref{sec:algchar}). This is related to the characterization of `algebraic twist subgroups' used by Ivanov \cite{Ivanov88-TeichModular} and others and further relies on a new characterization (Proposition~\ref{proposition:characterization of finitely supported elements}) of finitely-supported elements by the cardinality of their conjugacy classes.

The second ingredient comes from curve complexes. By a \define{curve} in $S$, 
we mean the equivalence class of an embedding $\Sph^1\hookrightarrow S$ of the circle that is neither nullhomotopic nor homotopic into an end of $S$, where embeddings are equivalent if they are homotopic or differ by precomposition with an orientation-reversing homeomorphism of $\Sph^1$.

A \define{multicurve} is a finite set of distinct curves that admit representative embeddings with disjoint images. 
The \define{curve complex} of $S$ is the simplicial complex $\CC(S)$ whose simplices correspond to multicurves of $S$ and face maps to inclusions of multicurves.

The curve complex of a surface was first introduced by Harvey \cite{Harvey} as a Teichm\"uller-theoretic analogue of the Tits building for symmetric spaces. 
A powerful theorem of Ivanov \cite{Ivanov97-IRMN}, Korkmaz \cite{Korkmaz}, and Luo \cite{Luo} in the finite-type setting, analogous to a fundamental theorem of Tits \cite{Tits}, states that every simplicial automorphism of $\CC(S)$ is induced by an element of $\Map(S)$.
Ivanov originally used this to give a new proof of Royden's famous theorem that $\Map(S)$ is the isometry group of the Teichm\"uller space of $S$ \cite{Royden}, and it is now known that many (indeed most) other complexes built from $S$ have automorphism group equal to $\Map(S)$ (e.g., see \cite{MR2952767} or \cite{Brendle-Margalit} and the references therein). Our final theorem extends this result to infinite-type surfaces:

\begin{theorem}
\label{thm:curvecomplex}
Let $S$ and $S'$ be surfaces and suppose $S$ has infinite-type. Then any simplicial isomorphism $\CC(S)\to \CC(S')$ is induced by a homeomorphism $S\to S'$.
\end{theorem}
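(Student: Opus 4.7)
The plan is to reduce to the classical finite-type theorem of Ivanov--Korkmaz--Luo via an exhaustion argument. Fix a nested sequence of compact finite-type subsurfaces $\Sigma_1 \subset \Sigma_2 \subset \cdots$ exhausting $S$, chosen so that each $\Sigma_n$ has complexity large enough to avoid the low-complexity exceptions of the finite-type theorem, so that each complementary component of $S\smallsetminus\Sigma_n$ carries an essential curve, and so that $\bigcup_n \Sigma_n = S$. Each $\Sigma_n$ has boundary a finite multicurve $P_n$, and the inclusion $\CC(\Sigma_n)\hookrightarrow\CC(S)$ identifies $\CC(\Sigma_n)$ with the subcomplex of $\CC(S)$ of curves disjoint from $P_n$ lying on the $\Sigma_n$-side. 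Since every finite multicurve of $S$ has compact support, $\CC(S) = \bigcup_n \CC(\Sigma_n)$.

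The crucial step is to characterize the subcomplex $\CC(\Sigma_n)\subset\CC(S)$ intrinsically so that $\Phi\colon\CC(S)\to\CC(S')$ carries it onto the analogous subcomplex for a specific finite-type subsurface $\Sigma_n'\subset S'$. For any finite multicurve $P\subset S$, the link $\link(P)\subset\CC(S)$ decomposes as the simplicial join of the curve complexes of the components of $S\smallsetminus P$. Two vertices of $\link(P)$ lie in the same complementary component iff they can be joined in $\link(P)$ by a chain of pairwise intersecting curves, a condition expressed purely via non-adjacency in $\CC(S)$; moreover, a component of $S\smallsetminus P$ has finite type iff its join factor is finite-dimensional. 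Consequently, finite-type subsurfaces of $S$ bounded by $P$ correspond combinatorially to unions of finite-dimensional ``intersection classes'' in $\link(P)$, and $\Phi$ sends $P_n$ to a finite multicurve $P_n'\subset S'$ bounding a finite-type subsurface $\Sigma_n'\subset S'$ with $\Phi(\CC(\Sigma_n))=\CC(\Sigma_n')$.

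The restriction $\Phi|_{\CC(\Sigma_n)}\colon\CC(\Sigma_n)\to\CC(\Sigma_n')$ is then a simplicial isomorphism between curve complexes of finite-type surfaces, so Ivanov--Korkmaz--Luo yields a homeomorphism $\phi_n\colon\Sigma_n\to\Sigma_n'$ inducing it, unique up to isotopy. Because $\phi_{n+1}|_{\Sigma_n}$ and $\phi_n$ both induce $\Phi|_{\CC(\Sigma_n)}$, a standard isotopy-extension argument (modifying $\phi_{n+1}$ in a collar of $\Sigma_n$) allows us to arrange inductively that $\phi_{n+1}$ extends $\phi_n$ on the nose. Passing to the direct limit produces a homeomorphism $h\colon\bigcup_n\Sigma_n\to\bigcup_n\Sigma_n'$ inducing $\Phi$ on every essential curve of $S$.

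The principal obstacle is then to verify that $\bigcup_n\Sigma_n' = S'$, i.e.\ that $\Phi$ correctly matches the ends of the two surfaces. The identity $\CC(S')=\bigcup_n\CC(\Sigma_n')$ only asserts that every essential curve of $S'$ can be realized in some $\Sigma_n'$; a priori the complement of $\bigcup_n\Sigma_n'$ in $S'$ could contain extraneous points accumulating onto unmatched ends. Overcoming this requires an intrinsic combinatorial description of $\End(S)$ in terms of $\CC(S)$---for instance, realizing each end of $S$ as a coherent choice, across all $n$, of a complementary component of $S\smallsetminus\Sigma_n$, which in turn is encoded by the partition of $\link(P_n)$ into intersection classes---together with a verification that $\Phi$ induces a bijection $\End(S)\to\End(S')$ compatible with the exhaustions on both sides. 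Once this end-correspondence is in hand, $h$ is surjective onto $S'$, yielding the desired global homeomorphism $S\to S'$.
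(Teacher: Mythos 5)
Your overall strategy---exhaust $S$ by finite-type subsurfaces $\Sigma_n$, use intersection classes in $\link(P_n)$ to locate the image subsurface, invoke Ivanov--Korkmaz--Luo on each piece, and pass to a direct limit---is exactly the paper's strategy, and your combinatorial analysis of $\link(P)$ is the paper's Lemma~\ref{lem:link} in slightly different form. However, the step you yourself flag as ``the principal obstacle'' is a genuine gap: with your hypotheses on the exhaustion (complexity large, every complementary component carries an essential curve), nothing prevents $S'\smallsetminus P_n'$ from having finite-type pieces other than $\Sigma_n'$ for every $n$, and so nothing forces $\bigcup_n \Sigma_n' = S'$. The remedy you gesture at---building a combinatorial model of $\End(S)$ inside $\CC(S)$ and matching ends---is not carried out, and it is also much heavier machinery than is needed.

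The missing ingredient is a constraint on the exhaustion itself: take each $\Sigma_n$ to be a \emph{principal} domain in the sense of Definition~\ref{def:principal}, i.e.\ finite type with $\chi\le -3$ and with \emph{every} other component of $S\smallsetminus P_n$ of \emph{infinite} type. Then the intersection class of $\Sigma_n$ is the \emph{unique} class of $\link(P_n)$ with finite clique number, so its image is the unique such class of $\link(P_n')$; this pins down $\Sigma_n'$ canonically and, crucially, shows that every other component of $S'\smallsetminus P_n'$ has infinite type, so no finite-type ``junk'' can hide in the complement. Surjectivity of the limit map then follows with no discussion of end spaces: for instance, running the identical argument for $\Phi^{-1}$ against an exhaustion of $S'$ by principal domains $W_j$ shows that each $\CC(W_j)$ is carried into some $\CC(\Sigma_n)$, hence $W_j$ is isotopic into $\Sigma_n'$, so the $\Sigma_n'$ exhaust $S'$. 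With your exhaustion condition strengthened to principality, the rest of your argument (including the collar adjustment making the $\phi_n$ literally compatible) goes through as written.
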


Theorem~\ref{thm:curvecomplex} was independently proven in a very recent paper \cite{Hernandez-Morales-Valdez} by Hern\'andez, Morales, and Valdez. 
We give a proof 
based on finite-type exhaustions and a simple observation, already present in \cite[Lemma 1]{Ivanov97-IRMN}, that a multicurve's link in $\CC(S)$ is able to detect the components of its complement in $S$ (Lemma \ref{lem:link}).

\subsection*{Acknowledgment}

Dowdall was supported by NSF grant DMS-1711089. Rafi was supported by NSERC Discovery grant RGPIN 435885. 
The authors thank Mark Bell for helping with the proof of Theorem~\ref{thm:curvecomplex}, and Yves de Cornulier for suggesting that elements without finite support may have uncountable conjugacy classes in $\Map(S)$ (c.f. Proposition~\ref{proposition:characterization of finitely supported elements}). We would also like to thank the Chili's in Fayetteville, Arkansas, and The Punter in Cambridge, U.K., for providing the margaritas and pints that facilitated these respective conversations.

\section{Preliminaries}
\label{sec:prelims}

Let us briefly establish some terminology for dealing with an infinite-type surface~$S$. A \define{domain} $Y$ in $S$ is a connected component of $S\setminus \alpha$ for some multicurve $\alpha$; we then define $\partial Y$ to be the smallest sub-multicurve $\beta$ of $\alpha$ so that $Y$ is a component of $S\setminus \beta$. Note that domains are only defined up to isotopy and that each domain $Y$ is itself a surface.
A curve in $S$ is \define{essential} in $Y$ if its equivalence class contains an embedding that defines a curve in $Y$.
A curve and a domain are \emph{disjoint} if they have disjoint representatives; thus the curves of $\partial Y$ are disjoint from $Y$. 

\begin{definition}
\label{def:principal}
A domain $Y$ of $S$ is said to be \define{principal} if $Y$ has finite-type with $\chi(Y) \le -3$
and if every component $X$ of $S\setminus \partial Y$ with $X\ne Y$ has infinite-type.
\end{definition}

Notice that $\Map(S)$ respectively acts on the sets of curves, multicurves, and domains of $S$. We make frequent implicit use of the following result of Hern\'andez, Morales, and Valdez extending the well-known Alexander method (see \cite[\S2.3]{Primer}) to the infinite-type setting:

\begin{theorem}[Hern\'andez--Morales--Valdez {\cite{Hernandez-Morales-Valdez2}}]
\label{thm:alexander}
Let $S$ be an infinite-type surface. If $f\in \Map(S)$ fixes each curve of $S$, then $f$ is trivial in $\Map(S)$.
\end{theorem}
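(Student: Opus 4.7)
The plan is to reduce to the classical Alexander method on finite-type pieces via a compact exhaustion. Choose a sequence of connected finite-type subsurfaces-with-boundary $K_1\subset K_2\subset \dotsb$ with $K_n\subset \inte(K_{n+1})$, $\bigcup_n K_n = S$, each component of $\partial K_n$ an essential curve of $S$, and $\chi(K_n)$ sufficiently negative that the finite-type Alexander method applies to $K_n$. Fix a homeomorphism $\tilde f$ representing the mapping class $f$.

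Because $f$ fixes each component of $\partial K_n$ up to isotopy, after a preliminary ambient isotopy we may assume $\tilde f(K_n) = K_n$ setwise for every $n$, and moreover that $\tilde f$ preserves each complementary component of $K_n$. The restriction $\tilde f|_{K_n}$ is then a self-homeomorphism of the finite-type surface $K_n$ whose induced mapping class fixes every essential simple closed curve of $K_n$: indeed, every essential curve in $K_n$ remains essential in $S$ and is therefore fixed by $f$. By the classical finite-type Alexander method, $\tilde f|_{K_n}$ agrees in $\Map(K_n)$ with a product of Dehn twists along components of $\partial K_n$. To rule out any nontrivial boundary twist, suppose $\tilde f|_{K_n}$ twisted nontrivially along a boundary component $c\subset\partial K_n$; one may construct a curve $\gamma$ of $S$ crossing $c$ essentially by concatenating an arc in $K_n$ with an arc in the adjacent (infinite-type) complementary domain, and then $\tilde f(\gamma)\ne \gamma$ in $\Map(S)$, contradicting the hypothesis. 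Hence $\tilde f|_{K_n}$ is isotopic to $\Id_{K_n}$ inside $K_n$.

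The main subtle step is to splice these local isotopies into a single global isotopy from $\tilde f$ to $\Id_S$. I would proceed inductively: assuming $\tilde f$ has been modified by earlier ambient isotopies so that it equals $\Id$ on $K_{n-1}$, the local isotopy from the previous paragraph can be chosen supported in $K_n\setminus K_{n-1}$ together with a small collar of $\partial K_{n-1}$ inside $S\setminus K_{n-1}$, because $\tilde f$ already agrees with $\Id$ on $\partial K_{n-1}$ and we are free to deform only on $K_n\setminus K_{n-1}$. Applying this produces a new representative of $f$ equal to $\Id$ on all of $K_n$. The main technical point is to ensure convergence: since the $n$-th correction has support in $S\setminus K_{n-1}$, any fixed compact subset of $S$ is eventually disjoint from all later supports, so the infinite composition stabilizes locally and defines a well-defined homeomorphism. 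Concatenating the finite-time isotopies on a time scale $[1-2^{-n+1},\, 1-2^{-n}]$ yields a continuous ambient isotopy from $\tilde f$ to $\Id_S$, showing that $f$ is trivial in $\Map(S)$. The delicate piece here is the very first one—arranging that $\tilde f$ preserves the entire nested family $\{K_n\}$ simultaneously rather than only one $K_n$ at a time—which I would handle by performing the initial adjustments inductively on $n$ with supports disjoint from $K_{n-1}$.
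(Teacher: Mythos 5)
This statement is imported: the paper cites it to Hern\'andez--Morales--Valdez and gives no proof of its own, so your argument can only be measured against the strategy of that cited source, which it matches in outline (finite-type exhaustion, the classical Alexander method on each piece, and a spliced isotopy whose corrections have supports escaping to infinity). The convergence mechanism at the end is fine. The genuine gap is in the step where you kill the boundary twists. You argue that if $\tilde f|_{K_n}$ is, rel $\partial K_n$, a nontrivial product of twists about components of $\partial K_n$, then some curve $\gamma$ crossing $c\subset\partial K_n$ has $\tilde f(\gamma)$ not isotopic to $\gamma$. This does not follow, because $\tilde f(\gamma)$ also depends on what $\tilde f$ does to the arc of $\gamma$ outside $K_n$, which you have not controlled at this stage. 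Concretely: let $c'$ be a parallel copy of $c$ just outside $K_n$, and realize the globally trivial class $T_c T_{c'}^{-1}$ by a homeomorphism that twists left in a collar of $c$ inside $K_n$ and twists right in the annulus between $c$ and $c'$. This representative preserves every $K_m$, fixes every curve of $S$ up to isotopy, and yet restricts to $K_n$ as a nontrivial boundary twist rel $\partial K_n$. So the conclusion you need --- that $\tilde f|_{K_n}$ is trivial rel $\partial K_n$, which your splicing step requires since the $n$-th correction is supported in $K_n\setminus K_{n-1}$ and an isotopy supported there cannot remove twisting concentrated at $\partial K_n$ --- is simply false for an arbitrary normalized representative.

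The repair is to run the Alexander method one level up rather than trying to detect the twisting with a single crossing curve. The hypothesis applied to all curves of $K_{n+1}$ (which include the curves crossing $\partial K_n$) shows that $\tilde f|_{K_{n+1}}$ acts trivially on $\CC(K_{n+1})$ and is therefore, rel $\partial K_{n+1}$, a product of twists about components of $\partial K_{n+1}$ only, with no residual twisting at $\partial K_n$. Arranging that no two components of $\partial K_{n-1}\cup\partial K_{n+1}$ are isotopic, the inclusion homomorphism $\Map\bigl(K_{n+1}\setminus \inte (K_{n-1}),\partial\bigr)\to\Map(K_{n+1},\partial K_{n+1})$ is injective, so this isotopy can be taken rel $K_{n-1}$ and supported in $K_{n+1}\setminus K_{n-1}$; after applying it, $\tilde f=\Id$ on $K_n$ and all remaining twisting sits in collars of $\partial K_{n+1}$, to be absorbed at the next stage. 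With that modification your inductive normalization and the limiting argument go through.
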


Accordingly, we say that $f\in \Map(S)$ has \define{finite support} if there is a finite-type domain $Y$ of $S$ such that $f$ fixes every curve disjoint from $Y$. 

\begin{lemma}
\label{lem:orientation}
If $f\in \Map(S)$ has finite support, then $f$ is orientation-preserving.
\end{lemma}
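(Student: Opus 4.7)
The plan is to locate a connected finite-type subsurface $W \subset S$, disjoint from the support-witnessing domain $Y$, on which $f$ restricts to the identity mapping class. Since a homeomorphism of a connected manifold is either globally orientation-preserving or globally orientation-reversing, producing a representative of $f$ that is the identity on the nonempty open set $\inte(W)$ will force $f$ to preserve orientation on all of $S$.

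To build such a $W$, first note that because $Y$ is finite-type while $S$ is infinite-type, $S \setminus \partial Y$ has at least one infinite-type component $X$ distinct from $Y$. I would embed a five-holed sphere $W$ inside $X$---chosen this way to avoid the exceptional surfaces $S_{0,4}$ and $S_{1,1}$ for which the Alexander method fails to be injective on curves---with its five boundary components being curves of $S$ chosen to sit deep enough in $X$ that every component of $S \setminus \partial W$ other than $W$ itself is infinite-type (for instance, one can partition a Cantor set of ends of $X$ into five infinite groups and let the boundary components of $W$ separate them). Every essential simple closed curve of $W$ then represents a genuine curve of $S$: it is nontrivial in $S$ by $\pi_1$-injectivity of $W \hookrightarrow S$, and non-end-homotopic because each of its complementary sides in $S$ contains an infinite-type piece. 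All such curves are disjoint from $Y$, so $f$ fixes each of them. In particular $f$ fixes the multicurve $\partial W$, and since $W$ is the unique finite-type component of $S \setminus \partial W$, $f$ must preserve $W$ setwise. The restriction $f|_W \in \Map(W)$ then fixes every curve of $W$, and the classical finite-type Alexander method, applied to the non-exceptional surface $W$, forces $f|_W = \Id$ in $\Map(W)$.

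The main delicate point is verifying that essential curves of $W$ really do give curves of $S$ in the paper's sense---that is, that none of them is accidentally homotopic into an end of $S$. This is precisely why one must embed $W$ carefully so that each complementary component of $\partial W$ in $S$ is infinite-type rather than degenerating to a neighborhood of a single end. Once this is arranged, the conclusion is automatic: a representative of $f$ equal to the identity on $\inte(W)$ is orientation-preserving there, and hence orientation-preserving on the connected surface $S$.
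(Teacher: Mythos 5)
Your strategy is the same as the paper's in outline: find a subsurface disjoint from the support on which $f$ fixes every curve, conclude via an Alexander-type argument that $f$ is orientation-preserving there, and propagate to all of the connected surface $S$. The difference is that the paper applies the infinite-type Alexander method (Theorem~\ref{thm:alexander}) directly to an infinite-type component of $S\setminus\partial Y$, whereas you try to get by with only the classical finite-type version by carving out a compact $W\cong S_{0,5}$. That economy is appealing, but your construction of $W$ has a genuine gap: a five-holed sphere all of whose complementary components in $S$ have infinite type need not exist. Take $S$ to be the flute surface (the plane minus an infinite discrete set of points), whose end space is a convergent sequence with a single accumulation point. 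Since $S$ is planar, every essential embedded $S_{0,5}$ has exactly five complementary components besides itself, and their end sets partition $\End(S)$ into disjoint clopen pieces; a clopen set consisting only of isolated ends is finite, so at most the one piece containing the accumulation point can be infinite, and hence at most one complementary component has infinite type --- never all five. Your fallback recipe of partitioning ``a Cantor set of ends of $X$ into five infinite groups'' also fails whenever $X$ has few ends, e.g.\ for the Loch Ness monster, which has exactly one end.

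The gap is repairable because your hypothesis on $W$ is stronger than what your argument actually uses. To guarantee that essential curves of $W$ are curves of $S$ and that distinct isotopy classes in $W$ remain distinct in $S$, it suffices that no component of $S\setminus\partial W$ other than $W$ be a disk, a once-punctured disk, or an annulus; and if you allow $W$ to have punctures and fewer boundary curves (for instance a principal domain as in Definition~\ref{def:principal}, which in the flute surface is a sphere with several punctures and a single boundary curve), such a $W$ with $\chi(W)\le -3$ always exists inside $X$. With that replacement the rest of your argument goes through: $f$ fixes each curve of $\partial W$ and a curve interior to $W$, hence preserves $W$; $f|_W$ fixes every curve of $W$; and the finite-type Alexander method then forces any representative of $f$ preserving $W$ to be orientation-preserving on $W$ (note you only need this, not $f|_W=\Id$ on the nose, which would require the care about boundary twists discussed in Remark~\ref{rem:nice_restriction}). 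Compare with the paper's three-line proof, which avoids all of these subsurface subtleties by invoking Theorem~\ref{thm:alexander} on the infinite-type piece itself.
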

\begin{proof}
By definition, there is an infinite-type domain $Y$ such that $f(Y) = Y$ and $f$ fixes each curve in $Y$. By Theorem~\ref{thm:alexander}, $f\vert_Y$ is isotopic to the identity. Thus $f$ evidently preserves the orientation on $Y$ and, consequently, all of $S$.
\end{proof}

Following Handel and Thurston \cite[\S2]{HandelThurston}, for $X$ any surface and $f\in \Map(X)$ we write $\mathcal{O}(f)$ for the set of curves $\alpha$ of $X$ such that $\{f^k(\alpha)\mid k\in \Z\}$ is finite and write $\partial f$ for the set of curves in $\mathcal{O}(f)$ that are disjoint from all other elements of $\mathcal{O}(f)$. It is clear that $\partial f$ is a canonical set of disjoint curves in $X$ for which $f(\partial f) = \partial f$. 

\begin{definition}
\label{def:annular}
Say that $f\in \Map(S)$ is \define{multi-annular} if
\begin{itemize}
\item $f$ has finite support,
\item $f$ fixes each component of $\partial f$, and
\item $f$ fixes every curve disjoint from $\partial f$.
\end{itemize}
If $\partial f$ is a single curve, we further say that $f$ is \define{annular}.
\end{definition}

Each curve $\alpha$ of $S$ determines an associated pair $D_\alpha,D_\alpha^{-1}\in\PMap(S)$ of \define{Dehn twists} about $\alpha$ defined as follows: Cut $S$ on $\alpha$ to obtain a $2$--manifold with two boundary components, rotate one component a full revolution to the left (for $D_\alpha$) or right (for $D_\alpha^{-1}$) and re-glue; for details \cite[Chapter 3]{Primer}. The Dehn twists $D_\alpha$ and $D_\alpha^{-1}$ are distinguished from each other by the choice of an orientation on $S$; thus in writing $D_\alpha$ have implicitly specified an orientation. As the distinction is not pertinent for us, we often (e.g., in Corollary~\ref{cor:generating-twists}) consider the pair $\{D_\alpha,D_\alpha^{-1}\}$, which is well-defined irrespective of orientation. 
We call $\alpha$ a \define{pants curve} if one component of $S\setminus \alpha$ is a thrice-punctured sphere. In this case there are also \define{half-twists} $H_\alpha,H_\alpha^{-1}\in \Map(S)$ satisfying $H_\alpha^{\pm2} = D_\alpha^\pm$ and defined by fixing $\alpha$ and swapping the other two punctures in the thrice-punctured sphere component of $S\setminus \alpha$; see \cite[\S9.1.3]{Primer}. Note that $H_\alpha^{\pm}\notin \PMap(S)$. To streamline notation, for each curve $\alpha$ of $S$ we define the associated \define{twists} about $\alpha$ to be
\[T_\alpha^\pm = \begin{cases}H_\alpha^\pm,& \text{if $\alpha$ is a pants curve}\\D_\alpha^\pm,&\text{otherwise}.\end{cases}\]
For a multicurve $\beta$ with components $\beta_1,\dotsc,\beta_k$, we similarly define the associated \define{twists} $T^\pm_\beta = \prod_{i=1}^k T^\pm_{\beta_i}$ about $\beta$.
We note the following trivialities:

\begin{lemma}
\label{lem:twists}
Let $\alpha,\beta$ be multicurves on a surface $S$. Then
\begin{enumerate}
\item\label{twist-annular} $T_\alpha$ is multi-annular with $\partial(T_\alpha) = \alpha$.
\item\label{twist-commute} $T_\alpha$ and $T_\beta$ commute iff $\alpha$ and $\beta$ are disjoint.
\item\label{twist-equal} If $n,m\in \Z\setminus\{0\}$ are such that $T_\alpha^n= T_\beta^m$, then $\alpha=\beta$.
\item\label{twist-conjugate} If $\sigma(f)\in \{1,-1\}$ records whether $f\in\Map(S)$ preserves orientation, then
\[f\circ T_\alpha\circ f^{-1} = T_{f(\alpha)}^{\sigma(f)}.\]
\end{enumerate}
\end{lemma}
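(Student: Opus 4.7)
The plan is to verify the four assertions in order, leaning on the fact that each $T_\alpha$ is supported inside a regular neighborhood of $\alpha$ (a union of annuli, or a pair of pants for a pants curve) and on the classical intersection-number calculation for iterated Dehn twists.

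For (\ref{twist-annular}), I would first observe that the support of $T_\alpha = \prod_i T_{\alpha_i}$ lies inside a disjoint union of neighborhoods of the components $\alpha_i$, hence has finite type, so $T_\alpha$ has finite support. I would then compute $\mathcal{O}(T_\alpha)$ and $\partial T_\alpha$ by cases: curves disjoint from $\alpha$ are fixed and hence lie in $\mathcal{O}(T_\alpha)$; the components $\alpha_i$ are individually fixed; while any curve $\gamma$ essentially meeting some $\alpha_i$ has infinite $T_\alpha$-orbit by the classical linear growth of geometric intersection number under iterated twists. The $\alpha_i$ are pairwise disjoint and disjoint from every curve disjoint from $\alpha$, so they lie in $\partial T_\alpha$; conversely, any curve $\gamma$ disjoint from $\alpha$ but not equal to any component of $\alpha$ sits inside a non-annular complementary domain of $\alpha$ and therefore intersects some other curve of that domain, which is also in $\mathcal{O}(T_\alpha)$. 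Hence $\partial T_\alpha = \alpha$, and the last two bullets in Definition~\ref{def:annular} are immediate.

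Part (\ref{twist-equal}) then follows immediately: since $\mathcal{O}(f^k) = \mathcal{O}(f)$ for any nonzero $k$, also $\partial(f^k) = \partial f$, so $T_\alpha^n = T_\beta^m$ forces $\alpha = \partial(T_\alpha^n) = \partial(T_\beta^m) = \beta$ by (\ref{twist-annular}). Part (\ref{twist-conjugate}) is the standard fact that conjugation sends a twist to the twist about the image curve; I would justify it by pushing a defining annular (or pants) neighborhood of $\alpha$ forward by $f$ and observing that the pushed-forward twist is left-handed iff $f$ preserves orientation. The same argument works for half-twists, so the statement is uniform across the $T_\alpha$ notation.

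Finally, for (\ref{twist-commute}), one direction is immediate: if $\alpha$ and $\beta$ are disjoint, choose disjoint regular neighborhoods to realize $T_\alpha$ and $T_\beta$ as commuting homeomorphisms. For the converse, suppose $T_\alpha T_\beta = T_\beta T_\alpha$. For each component $\beta_j$ of $\beta$, part (\ref{twist-conjugate}) combined with $\sigma(T_\alpha) = 1$ (from Lemma~\ref{lem:orientation}) gives $T_{T_\alpha(\beta_j)} = T_\alpha T_{\beta_j} T_\alpha^{-1} = T_{\beta_j}$, and then (\ref{twist-equal}) forces $T_\alpha(\beta_j) = \beta_j$. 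If $\beta_j$ crossed any component $\alpha_i$, the infinite-orbit argument from (\ref{twist-annular}) would be violated; hence every $\beta_j$ is disjoint from $\alpha$, as required. The main obstacle is really just the one input I keep reusing—that iterated twists produce infinite orbits on crossing curves—which is classical on finite-type surfaces; I would handle it by passing to a finite-type subsurface containing $\alpha$ together with the test curve and invoking the standard Farb--Margalit calculation verbatim.
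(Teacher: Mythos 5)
The paper states this lemma without proof---it labels the four items as ``trivialities''---so there is no argument of record to compare against; your writeup is a correct and essentially canonical verification of the kind any reader would supply, with the key input (linear growth of intersection number under iterated twists, applied inside a finite-type subsurface) identified correctly.

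One step should be tightened, in the converse direction of part (2): commutation of $T_\alpha$ with the \emph{product} $T_\beta=\prod_j T_{\beta_j}$ does not directly yield $T_\alpha T_{\beta_j}T_\alpha^{-1}=T_{\beta_j}$ for each individual component, since a priori $T_\alpha$ could permute the $\beta_j$ nontrivially while commuting with the product. The fix is short: apply part (4) to the whole multicurve to get $T_{T_\alpha(\beta)}=T_\alpha T_\beta T_\alpha^{-1}=T_\beta$, deduce $T_\alpha(\beta)=\beta$ as a multicurve from part (3), and conclude that each $\beta_j$ has finite $T_\alpha$--orbit, hence lies in $\mathcal{O}(T_\alpha)$ and therefore cannot cross $\alpha$ by your intersection-growth computation from part (1); being disjoint from $\alpha$, each $\beta_j$ is then fixed, which is the statement you wanted. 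A further micro-point: the intersection-growth estimate is usually stated for positive multitwists, whereas some components of $T_\alpha$ may be half-twists; pass to $T_\alpha^2$, which is an honest product of Dehn twists, and use $\mathcal{O}(f)=\mathcal{O}(f^2)$ exactly as you already do in part (3).
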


The following fact will play a crucial role in our proof of Lemma~\ref{lem:multi-annular}.

\begin{lemma}
\label{lem:canonical_curves}
If $f\in \Map(S)$ is nontrivial and has finite support, then $\partial f$ is a nonempty multicurve in $S$.
\end{lemma}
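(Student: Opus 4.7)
The plan is to exploit a finite-type support of $f$ and transport the classical Nielsen--Thurston theory from that support into $\Map(S)$. Let $Y$ be a finite-type domain witnessing the finite support of $f$, chosen (by enlarging to absorb any finite-type complementary components on which $f$ acts nontrivially) so that $f$ is isotopic to the identity on each component of $S\setminus Y$. Then $f$ fixes every curve disjoint from $Y$, in particular each component of $\partial Y$, and induces a well-defined mapping class $f|_Y\in\Map(Y)$. Since $S$ has infinite type while $Y$ has finite type, $\partial Y$ is nonempty. The restriction $f|_Y$ itself must be nontrivial: otherwise $f$ would be isotopic to the identity on both $Y$ and $S\setminus Y$, hence fix every curve of $S$, forcing $f=\Id$ by Theorem~\ref{thm:alexander} and contradicting the hypothesis.

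Since $f|_Y$ is a nontrivial mapping class of a finite-type surface, Nielsen--Thurston applies and produces a canonical (possibly empty) reduction multicurve $\sigma\subseteq Y$ such that on each complementary component of $\sigma$ in $Y$ the restriction of $f|_Y$ (or a suitable power) is pseudo-Anosov or periodic. The candidate nonempty subset of $\partial f$ I propose is the multicurve $\mu\subseteq\sigma\cup\partial Y$ consisting of $\sigma$ together with those components of $\partial Y$ that border at least one pseudo-Anosov piece of $f|_Y$. Each $\alpha\in\mu$ lies in $\mathcal{O}(f)$: components of $\partial Y$ are fixed by $f$, while components of $\sigma$ have finite $f$-orbit by the defining property of the canonical reduction system. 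To conclude $\mu\subseteq\partial f$, I verify that any curve $\beta$ with $i(\alpha,\beta)>0$ has infinite $f$-orbit. The key observation is that such a $\beta$ necessarily contains an essential arc which either crosses a canonical reducing curve in $\sigma$ or traverses a pseudo-Anosov piece of $f|_Y$; the classical pseudo-Anosov stretching-of-arcs estimate (cf.\ \cite{HandelThurston}) then forces the $f|_Y$-orbit of this arc, and therefore the $f$-orbit of $\beta$, to be infinite, ruling out $\beta\in\mathcal{O}(f)$. This handles the cases in which $f|_Y$ has a pseudo-Anosov piece or a nonempty canonical reduction system, giving $\partial f\supseteq\mu\neq\emptyset$.

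The main obstacle is the residual subcase in which $f|_Y$ is purely periodic on all of $Y$: then $\sigma=\emptyset$, no pseudo-Anosov piece appears, and the stretching-of-arcs argument is unavailable. In this situation I plan to pass to a power $f^n$ with $f^n|_Y=\Id$; since $f^n$ is also isotopic to the identity on each component of $S\setminus Y$, it is necessarily a Dehn multi-twist about some sub-multicurve $\tau\subseteq\partial Y$. The technical heart of the proof is to show that $\tau$ must be nonempty under the lemma's hypothesis---equivalently, to rule out a nontrivial finitely-supported $f$ having a power equal to the identity using the full strength of the definition of finite support and Theorem~\ref{thm:alexander}---and then to transfer the support $\tau$ of the multi-twist $f^n$ back to $\partial f$ for the original $f$ via the same orbit analysis already used in the reducible case.
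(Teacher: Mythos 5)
Your overall strategy is workable, but the proposal defers rather than proves the step on which the entire lemma hinges. In your residual (periodic) subcase you must show that the multitwist $f^n$ about $\tau\subseteq\partial Y$ is nontrivial, which is exactly the assertion that a nontrivial finitely supported mapping class cannot have finite order. This is not a routine consequence of Theorem~\ref{thm:alexander}: if a nontrivial finite-order $f$ with finite support existed, then $\mathcal{O}(f)$ would be the set of all curves and $\partial f$ would be empty, so the lemma itself would be false. The paper isolates this as a separate statement (Lemma~\ref{lemma:infinite_order}) and proves it by passing to the group $\Homeo(\overline{Y},\partial Y)$ of homeomorphisms fixing $\partial Y$ pointwise and showing that its mapping class group is torsion-free, via the exact sequence $1\to B_k(Y')\to\Gamma/\Gamma_0\to\Gamma'/\Gamma'_0\to 1$ together with torsion-freeness of surface braid groups and of mapping class groups of compact surfaces rel boundary. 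One must also rule out the possibility that $f^n$ is a \emph{trivial} product of boundary twists even though $f\neq\Id$; this is where the relative (rel $\partial Y$) viewpoint is essential, since $f|_Y$ being periodic in $\Map(Y)$ says nothing about its order rel boundary. Until you supply an argument of this kind, the "technical heart" you flag remains a genuine gap.

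Beyond that, your route through the reducible/pseudo-Anosov cases differs from the paper's and carries its own burden. You propose to verify directly that every curve $\beta$ of $S$ crossing a curve of $\mu$ fails to lie in $\mathcal{O}(f)$, by tracking orbits of the arcs $\beta\cap Y$; this requires an arc version of the canonical-reduction-system property (including the case of a reducing curve bounded on both sides by periodic pieces, where the growth comes from relative twisting rather than pseudo-Anosov stretching), and is only sketched. The paper avoids this entirely: it takes an exhaustion $Y_1\subset Y_2\subset\cdots$ by suitable domains, applies Lemma~\ref{lemma:infinite_order} and \cite[Lemma 2.2]{HandelThurston} to get nonempty finite sets $\partial g_i$ for $g_i=f|_{Y_i}$, and then observes that $\partial g_{i+1}\subseteq\partial g_i$ (because $\mathcal{O}(g_{i+1})$ contains all curves of $Y_{i+1}$ disjoint from $Y_i$), so that $\partial f=\bigcap_i\partial g_i$ is a nonempty nested intersection. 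If you want to keep your single-domain approach, you should either prove the arc-orbit estimates you invoke or adopt the exhaustion trick, and in either case you must import or reprove the torsion-freeness statement.
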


As $\partial f$ is clearly empty when $f$ has finite-order, it will help to first establish:

\begin{lemma}\label{lemma:infinite_order}
If $f\in \Map(S)$ is nontrivial and has finite support, then $f$ has infinite-order. Furthermore, if $Y$ is a principal domain in $S$ so that $f$ fixes every curve disjoint from $Y$ and no power of $f$ is a nontrivial product of Dehn twists about curves of $\partial Y$, then the restriction $g= f\vert_Y$ is an infinite-order element of $\Map(Y)$.
\end{lemma}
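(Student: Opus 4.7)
The plan is to reduce both statements to classical finite-type mapping class theory by restricting $f$ to the compact bordered subsurface $\bar Y = Y\cup\partial Y$. I will first prove part 1 without the furthermore hypothesis, and then use it to derive the furthermore.

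For part 1, I would begin by enlarging the given finite-type witness $Y_0$ of $f$'s support to a principal domain $Y\supset Y_0$: absorb finite-type complementary components and add handles so that $\chi(Y)\le-3$ and each remaining complementary component of $\partial Y$ is infinite-type. Since any curve disjoint from $Y$ is disjoint from $Y_0$, $f$ still fixes every curve disjoint from $Y$. Three observations now describe the ambient action. First, $Y$ is the unique finite-type component of $S\setminus\partial Y$ (by the principal property) and $f$ fixes each $\gamma_i\in\partial Y$, so a representative of $f$ fixing $\partial Y$ setwise must preserve $Y$. Second, for each other (necessarily infinite-type) component $X_j$, one can pick a non-peripheral essential curve $\alpha\subset X_j$---such curves exist because infinite-type surfaces have infinitely generated fundamental group---and observe that $f(\alpha)=\alpha$ combined with the fact that a non-peripheral isotopy class has representatives in only one complementary component of $\partial Y$ forces $f(X_j)=X_j$. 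Third, Theorem~\ref{thm:alexander} applied to $X_j$ then gives $f|_{X_j}=1$ in $\Map(X_j)$, so I may isotope a representative $\phi$ of $f$ to be the identity on every $X_j$ and hence supported in $\bar Y$.

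The restriction $\phi|_{\bar Y}$ defines an element $\bar f$ in the mapping class group $\Map(\bar Y,\partial Y)$ of $\bar Y$ fixing $\partial Y$ pointwise, and extension by identity exhibits this construction as an injective homomorphism $\Map(\bar Y,\partial Y)\hookrightarrow\Map(S)$ carrying $\bar f\mapsto f$. Since $\bar Y$ is a compact orientable surface with nonempty boundary and $\chi(\bar Y)\le-3<0$, the group $\Map(\bar Y,\partial Y)$ is torsion-free (a classical fact, via the proper discontinuous action on Teichm\"uller space). Hence $f^n=1$ would force $\bar f^n=1$, then $\bar f=1$, and thus $f=1$, contradicting $f$ nontrivial. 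This completes part 1. For the furthermore, $Y$ is given principal and satisfies the extra no-twist-power hypothesis; the same setup produces $\bar f\in\Map(\bar Y,\partial Y)$. Suppose toward contradiction that $g=f|_Y$ has finite order $n$. The classical capping exact sequence identifies the kernel of the forget-boundary map $\Map(\bar Y,\partial Y)\to\Map(Y)$ as the free abelian subgroup generated by Dehn twists about the components of $\partial Y$, so $\bar f^n$---and therefore also $f^n\in\Map(S)$---is a product of Dehn twists about $\partial Y$. By hypothesis no power of $f$ is a nontrivial such product, so $f^n=1$; but part 1 gives that $f$ has infinite order, a contradiction. Hence $g$ has infinite order.

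The main obstacle is the second observation above: verifying that $f$ preserves each $X_j$ individually. This crucially exploits the infinite-type condition on complementary components built into the definition of principal, since otherwise a non-peripheral essential curve in a finite-type complementary component could in principle be isotoped into another. The remaining inputs---torsion-freeness of bordered mapping class groups, the capping sequence with its well-known kernel, and injectivity of extension-by-identity on the essential (incompressible) subsurface $\bar Y\subset S$---are all standard facts from finite-type mapping class group theory.
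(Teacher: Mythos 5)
Your overall strategy---isotope $f$ to a representative supported in $\overline{Y}=Y\cup\partial Y$, view it as an element $\bar f$ of the mapping class group of $\overline{Y}$ rel $\partial Y$, get the first claim from torsion-freeness of that group, and get the second from the capping sequence (kernel generated by boundary twists) together with the no-twist-power hypothesis---is essentially the paper's proof. However, two of the ``standard facts'' you lean on are asserted in a form that is not correct as stated, and they coincide with exactly the places where the paper's proof does its real work. First, $\overline{Y}$ need not be compact: a finite-type domain $Y$ may contain isolated planar ends (punctures) of $S$, so $\overline{Y}$ is in general a bordered surface \emph{with punctures}. Your justification for torsion-freeness therefore does not literally apply (and in any case proper discontinuity of an action never yields torsion-freeness---finite groups act properly discontinuously; one needs freeness). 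The paper handles this by plugging the punctures to form a compact $Y'$ and using the exact sequence $1\to B_{k}(Y')\to \Gamma/\Gamma_0\to\Gamma'/\Gamma'_0\to 1$, deducing torsion-freeness of the punctured bordered group from torsion-freeness of the surface braid group $B_k(Y')$ together with the compact bordered case. Your proof needs this extra step or a citation that explicitly covers punctured bordered surfaces.

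Second, for the first claim you pass from $f^n=1$ to $\bar f^n=1$ by asserting that extension-by-identity $\Map(\overline{Y},\partial Y)\to\Map(S)$ is injective. That is true here, but it is not automatic from $\overline{Y}$ being essential: the kernel of such a map is in general generated by twists about boundary curves bounding disks or once-punctured disks in the complement and by twist-differences along boundary curves that are isotopic in $S$, and ruling these out uses that the complementary components are infinite-type and that $\partial Y$ is a multicurve of pairwise non-isotopic essential curves---i.e., essentially the same computation as identifying the capping kernel, and one must moreover extend the finite-type statement to an infinite-type ambient surface. The paper avoids claiming injectivity altogether: it proves the ``furthermore'' statement first (there any failure of injectivity is explicitly a boundary multitwist, which the hypothesis excludes) and then derives the first claim from the second via $f^n\vert_Y=g^n$ and the existence of a suitable principal domain (Remark~\ref{rem:nice_restriction}). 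Your argument is repairable, but as written these two steps are gaps rather than citations of known results.
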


\begin{remark}
\label{rem:nice_restriction}
A domain $Y$ as in Lemma~\ref{lemma:infinite_order} may always be obtained by enlarging a finite-type domain for which $f$ fixes every curve disjoint from it. 
Further, the restriction $f\vert_Y$ is well-defined in $\Map(Y)$: Indeed, $f$ induces an automorphism of $\CC(Y)$ which, according to \cite{Luo} (and using $\chi(Y)\le -3$), is equivalent to an element of $\Map(Y)$.
\end{remark}

\begin{proof}[Proof of Lemma~\ref{lemma:infinite_order}]
Fix a particular subset $Y\subset S$ representing the domain in the statement, and let $\overline{Y}$ be its closure in $S$. We similarly let the subset $\partial Y = \overline{Y}\setminus Y$ represent the multicurve $\alpha = \partial  Y$.
Let $\Gamma = \Homeo(\overline{Y},\partial Y)$ denote the group of homeomorphisms of $\overline{Y}$ that fix $\partial Y$ pointwise, and write $\Gamma_0$ for its identity component. Also let $Y'$ be the compactification of $\overline{Y}$ obtained by `plugging' each end of $\overline{Y}$ with a point. That is, $Y'$ is a compact $2$--manifold with boundary such that $\overline{Y} = Y'\setminus P$ for some finite (and possibly empty) set $P\subset \mathrm{int}(Y')$. We then similarly have $\Gamma' = \Homeo(Y',\partial Y)$ with identity component $\Gamma'_0$. We now have (see \cite[\S2.4]{Guaschi-Pineda}) an exact sequence
\[1\longrightarrow B_{k}(Y')\longrightarrow \Gamma/\Gamma_0 \longrightarrow \Gamma'/\Gamma'_0\longrightarrow 1,\]
where $B_k(Y')$ is the braid group on $k = \left\vert P\right\vert$ strands in $Y'$. (Note that $B_k(Y')$ is trivial when $P$ is empty.) 
The group $B_k(Y')$ is torsion-free by \cite[Corollary 9]{Guaschi-Pineda} (see also \cite[Theorem 8]{Fadell-Neuwirth}), and the quotient $\Gamma'/\Gamma'_0$ is torsion-free by \cite[Corollary~7.3]{Primer}. Therefore the middle group $\Gamma/\Gamma_0$ is torsion-free as well.

Since $f$ fixes every curve disjoint from $Y$, we may use Theorem~\ref{thm:alexander} to choose a representative $\varphi\in \Homeo(S)$ that restricts to the identity on $S\setminus Y$. In particular, $\varphi$ fixes $\partial Y$ pointwise. Restricting to $\overline{Y}$ now yields an element $\mu = \varphi\vert_{\overline{Y}}\in \Gamma$ such that the further restriction of $\mu$ to $Y = \mathrm{int}(\overline{Y})$ represents $g = f\vert_Y\in \Map(Y)$.

We caution that the coset of $\mu$ in $\Gamma/\Gamma_0$ is not canonically defined, as it depends on the chosen representative $\varphi$. Nevertheless, $\mu$ is nontrivial in $\Gamma/\Gamma_0$, as otherwise a path from $\mu$ to $\mathrm{Id}_{\overline{Y}}$ in $\Gamma$ would extend to an isotopy between $\varphi$ and $\mathrm{Id}_{S}$, contradicting the nontriviality of $f$.
Thus $\mu\Gamma_0\in \Gamma/\Gamma_0$ has infinite-order.

We now prove that $g =f\vert_{Y}$ has infinite-order in $\Map(Y)$; as $f^n\vert_Y = g^n$, this will imply that $f$ has infinite-order as well. If instead $g^k \simeq \mu^k\vert_Y$ is trivial for $k\ge 1$, then we may adjust $\mu^k$ by an isotopy in $Y = \mathrm{int}(\overline{Y})$ to obtain some $\psi\in \Gamma$ that is supported in a neighborhood of $\partial Y$ and is in fact a nontrivial (since $\mu^k\Gamma_0\ne\Gamma_0)$ product of Dehn twists about the curves of $\partial Y$; see \cite[Proposition 3.19]{Primer}. Extending this isotopy $\mu^k\simeq \psi$ via the identity gives an isotopy from $f^k\simeq \varphi^k$ to a nontrivial element of the form 
\[
D_{\gamma_1}^{k_1}\dotsc D_{\gamma_n}^{k_n}\in \Map(S),
\] 
where $\gamma_1,\dotsc, \gamma_n$ are the component curves of $\partial Y$. This contradicts our assumption on $f$.
\end{proof}

\begin{proof}[Proof of Lemma~\ref{lem:canonical_curves}]
Fix an exhaustion $Y_1\subset Y_2\subset \dotsc$ of $S$ by domains $Y_i$ satisfying the hypothesis of Lemma~\ref{lemma:infinite_order} and such that $\partial Y_i$ is essential in $Y_{i+1}$ for each $i$. Let $g_i \in \Map(Y_i)$ be the restriction $f\vert_{Y_i}$ to $Y_i$ (see Remark~\ref{rem:nice_restriction}) and note that $g_i$ has infinite-order by Lemma~\ref{lemma:infinite_order}.

Consider the sets $\mathcal{O}(g_i)$ and $\partial g_i$. Since $g_i$ has infinite-order and $\mathcal{O}(g_{i+1})$ is nonempty (as it contains $\partial Y_i$), we may apply \cite[Lemma 2.2]{HandelThurston}
to conclude that $\partial g_i$ is nonempty for each $i > 1$. Note also that $\partial g_i$ is finite, as $Y_i$ has finite-type.
It is clear from the definitions that $\mathcal{O}(g_i) \subset\mathcal{O}(g_{i+1})$ for each $i$ and that 
\[
\mathcal{O}(f) = \bigcup_i \mathcal{O}(g_i)
\qquad\text{and}\qquad
\partial f = \bigcup_i \bigcap_{j\ge i} \partial g_j.
\] 
Since $\mathcal{O}(g_{i+1})$ contains all curves of $Y_{i+1}$ that are disjoint from $Y_i$ by construction, we see that each element of $\partial g_{i+1}$ must in fact be an essential curve of $Y_i$. Therefore we have $\partial g_{i+1} \subset \partial g_{i}$ and may consequently conclude that $\partial f = \cap_i \partial g_i$ is a nonempty finite set of disjoint curves of $S$.
\end{proof}

\section{Automorphisms of curve complexes}
\label{sec:curvecomplex}

In this section we prove Theorem~\ref{thm:curvecomplex}. If $\alpha$ is a multicurve in a surface $S$, the \define{link of $\alpha$} is the full subcomplex $\link(\alpha)\subset \CC(S)$ spanned by the set of vertices of $\CC(S)\setminus\alpha$ that are adjacent to $\alpha$ (that is, the curves $\beta$ that are distinct and disjoint from each curve of $\alpha$). Define a relation $\sim$ on the vertices of $\link(\alpha)$ by declaring $\beta\sim\delta$ if there exists a vertex in $\link(\alpha)$ that is nonadjacent to both $\beta$ and $\delta$. For $\beta$ a vertex of $\link(\alpha)$, we denote by $[\beta]$ the set of curves related to $\beta$, and write $\link(\alpha)\vert_{[\beta]}$ for the full subcomplex of $\link(\alpha)$ spanned by $[\beta]$.
The following shows that $\sim$ is an equivalence relation and gives a bijection between the equivalence classes of $\link(\alpha)$ and the components of $S\setminus\alpha$ that are not thrice-punctured spheres (as such components have no essential curves).
\begin{lemma}
\label{lem:link}
Let $\alpha$ be a multicurve of an infinite-type surface $S$. Let $\beta$ be a vertex of $\link(\alpha)$, and let $Y$ be the component of $S\setminus\alpha$ containing $\beta$. Then $[\beta]$ is equal to the set of curves that are essential in $Y$ and $\link(\alpha)\vert_{[\beta]} = \CC(Y)$.
\end{lemma}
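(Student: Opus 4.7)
The overall plan is to identify the equivalence class $[\beta]$ with the set of curves essential in $Y$, after which the equality $\link(\alpha)\vert_{[\beta]}=\CC(Y)$ will follow by unpacking definitions. The forward inclusion $[\beta]\subseteq\{\text{curves essential in }Y\}$ is the easy direction: if $\delta\in[\beta]$ is witnessed by some $\gamma\in\link(\alpha)$ nonadjacent to both $\beta$ and $\delta$, then $\gamma$ is disjoint from $\alpha$ and hence lies in a single component $Y'$ of $S\setminus\alpha$. Since ``nonadjacent to $\beta$'' means $\gamma=\beta$ or $\gamma$ intersects $\beta$, the curve $\beta$ must meet $Y'$, forcing $Y'=Y$; the identical argument places $\delta$ inside $Y$ as well.

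For the reverse inclusion, given $\delta$ essential in $Y$ I must produce such a witness $\gamma$. If $\delta=\beta$ or $\delta$ intersects $\beta$, the choice $\gamma=\beta$ itself works, coinciding with $\beta$ (hence nonadjacent to it) and either equaling or intersecting $\delta$. The remaining case---$\beta$ and $\delta$ distinct and disjoint---is the one requiring genuine content. The key observation is that the existence of two distinct disjoint essential curves in $Y$ automatically rules out the low-complexity possibilities (thrice-punctured sphere, four-holed sphere, once-punctured torus), in each of which any two distinct essential curves meet. With $Y$ thereby of sufficient complexity, a standard change-of-coordinates argument (for instance, applying a sufficiently high power of a pseudo-Anosov supported on $Y$ to any initial curve) yields a simple closed curve $\gamma\subset Y$ essentially intersecting both $\beta$ and $\delta$; since $\gamma\subset Y$ is disjoint from $\alpha$, it is a vertex of $\link(\alpha)$ and witnesses $\delta\sim\beta$.

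Once $[\beta]$ has been identified with the essential curves of $Y$, the equality $\link(\alpha)\vert_{[\beta]}=\CC(Y)$ is tautological: both simplicial complexes have the same vertex set, and a collection of these vertices spans a simplex on either side exactly when the corresponding curves are pairwise disjoint in $S$---equivalently, pairwise disjoint in $Y$, since they all lie in $Y$. The main obstacle in this plan is the distinct-disjoint subcase of the reverse inclusion, where a nontrivial curve has to be produced rather than read off from the hypothesis; it is the automatic exclusion of the pathological low-complexity components by the presence of the pair $\beta,\delta$ that makes that step routine.
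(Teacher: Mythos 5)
Your proof is correct and follows essentially the same route as the paper: the same component argument for the forward inclusion, and the production of a witness curve in $Y$ meeting both $\beta$ and $\delta$ for the reverse inclusion (the paper simply asserts the existence of such a third curve uniformly, without your case split or the change-of-coordinates justification). The extra care you take with the low-complexity components and with nonadjacency of a vertex to itself only makes explicit what the paper leaves implicit.
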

\begin{proof}
Each vertex of $\link(\alpha)$ corresponds to a curve disjoint from $\alpha$ and so lies in some connected component of $S\setminus\alpha$. If $\delta$ and $\gamma$ are nonadjacent vertices in $\link(\alpha)$, then their corresponding curves intersect and so necessarily lie in the same component. In particular, if $\gamma$ is nonadjacent to both $\delta$ and $\beta$, then $\delta$ (and $\gamma$) and $\beta$ lie in the same component of $S\setminus\alpha$. This proves that the curves of $[\beta]$ lie in $Y$. Conversely, for any curve $\delta$ contained in $Y$ we may choose a third curve $\gamma$ in $Y$ that intersects both $\delta$ and $\beta$. Thus $\delta\sim \beta$ and we have proven that $[\beta]$ is the set of curves in $Y$. The fact that $\link(\alpha)\vert_{[\beta]} = \CC(Y)$ is now immediate from the definitions.
\end{proof}

We now prove that isomorphisms of curve complexes are geometric.
\begin{proof}[Proof of Theorem~\ref{thm:curvecomplex}]
Let $\Psi\colon \CC(S)\to \CC(S')$ be an isomorphism.
Fix an exhaustion $Y_1\subset Y_2\subset\dotsc$ of $S$ by principal domains $Y_i$ (Definition~\ref{def:principal}) and set $\alpha_i = \partial Y_i$. By enlarging the domains if necessary, we assume the curves of $\alpha_i$ are essential in $Y_{i+1}$. Since $Y_i$ is principal, Lemma~\ref{lem:link} implies that the equivalence class $E_i$ corresponding to $Y_i$ is the unique equivalence class of $\link(\alpha_i)$ with finite clique number.

For each $i$, we set $\alpha'_i = \Psi(\alpha_i)$ and observe that $\Psi$ restricts to an isomorphism $\link(\alpha_i)\to \link(\alpha'_i)$ that maps equivalence classes to equivalence classes. The image $E'_i$ of $E_i$ is therefore the unique equivalence class of $\link(\alpha'_i)$ with finite clique number. Writing $Y'_i$ for the component of $Y'_i$ of $S'\setminus\alpha'_i$ corresponding to $E'_i$ (Lemma~\ref{lem:link}), it follows that $Y'_i$ has finite-type and that $\Psi$  restricts to an isomorphism
\[\CC(Y_i) \cong \link(\alpha_i)\vert_{E_i}\stackrel{\Psi}{\longrightarrow}\link(\alpha'_i)\vert_{E'_i}\cong \CC(Y'_i).\]
By the original result for finite-type curve complexes (e.g., \cite{Luo}), each of these isomorphisms is induced by a homeomorphism 
\[
\phi_i\colon Y_i\to Y'_i\subset S'.
\]
We note that $\phi_{i+1}$ is compatible with $\phi_i$ by construction. That is, $\phi_{i+1}(Y_i)=Y'_i$ with the restriction of $\phi_{i+1}$ to $Y_i$ agreeing with $\phi_i$. Since $S$ is the union of the $Y_i$, the direct limit of $(\phi_i)$ now gives a homeomorphism $\phi\colon S \rightarrow S'$ inducing $\Psi$.
\end{proof}

\section{Algebraic characterization of twists}
\label{sec:algchar}
For the entirety of this section, fix an infinite-type surface $S$ and let $\Gamma$ denote either $\Map(S)$ or $\PMap(S)$. Fix also a finite-index subgroup $G$ of $\Gamma$. Our goal in this section is to give an algebraic characterization of certain `generating twists' of~$G$ (Definition~\ref{def:gen-twist}). The first step is to characterize finitely-supported elements:

\begin{definition}
\label{def:countable_conj}
Set $\F_G = \{g\in G \mid \text{the conjugacy class of $g$ in $G$ is countable}\}\le G$.
\end{definition}

\begin{proposition}\label{proposition:characterization of finitely supported elements}
An element $f\in G$ has finite support if and only if $f\in \F_G$.
\end{proposition}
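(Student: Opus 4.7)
The plan is to establish the biconditional by showing the forward direction via a straightforward counting argument and the reverse via an explicit construction of uncountably many conjugates.

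For the $(\Leftarrow)$ direction, if $f \in G$ has finite support in some finite-type $Y \subset S$, then every conjugate $g f g^{-1}$ has finite support in the finite-type subsurface $g(Y)$. The set of all finitely-supported elements of $\Map(S)$ is a countable union, over the countably many isotopy classes of finite-type subsurfaces $Y' \subset S$, of elements supported in $Y'$; and for each such $Y'$ this set is itself countable, being parameterized by the finitely generated group $\Map(Y')$ (up to boundary twists). Hence the entire conjugacy class of $f$ is contained in a countable set, so $f \in \F_G$.

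For $(\Rightarrow)$, suppose $f$ does not have finite support. The plan is to exhibit uncountably many distinct conjugates in $G$ using a multi-twist construction. First I would inductively build pairwise disjoint, locally finite curves $\alpha_1, \alpha_2, \ldots$ in $S$ satisfying $f(\alpha_i) \neq \alpha_j$ for all $i,j$. The key enabling observation is that for any finite-type domain $Y'$, there must be \emph{infinitely many} curves disjoint from $Y'$ that are moved by $f$---else enlarging $Y'$ to absorb the finitely many exceptions would give $f$ finite support. At stage $i$ one therefore chooses $\alpha_i$ disjoint from a finite-type domain containing the $i$-th term of a fixed exhaustion together with the previous $\alpha_j$'s and their $f$-images, and avoiding the finitely many bad curves $f^{-1}(\alpha_j)$ for $j < i$. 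This yields local finiteness, $f(\alpha_i)\ne\alpha_i$ (moved), $f(\alpha_i)\ne\alpha_j$ for $j<i$ (by construction), and $f(\alpha_j)\ne\alpha_i$ for $j<i$ (by containment in the domain).

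Next, for each $A \subset \N$ set $h_A = \prod_{i \in A} D_{\alpha_i}^{N}$, which is well-defined in $\Map(S)$ by local finiteness of the $\alpha_i$ and lies in $\PMap(S)$ since Dehn twists are pure; either way $h_A \in \Gamma$. Because $[\Gamma : G]$ is finite, pigeonhole yields an uncountable subfamily $\mathcal A \subset 2^\N$ for which the $h_A$'s all lie in a single coset of $G$; fixing $A_0 \in \mathcal A$, the elements $g_A := h_A h_{A_0}^{-1}$ then all lie in $G$ for $A \in \mathcal A$. Since the $D_{\alpha_i}$ commute, for distinct $A, A' \in \mathcal A$ we get
\[
g_{A'}^{-1} g_A \;=\; h_{A'}^{-1} h_A \;=\; \prod_{i \in A \triangle A'} D_{\alpha_i}^{\epsilon_i N},
\]
a nontrivial multi-twist whose canonical multicurve is $\{\alpha_i : i \in A \triangle A'\}$. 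If this centralized $f$, then Lemma~\ref{lem:twists}(4) applied termwise would force the set equality $\{f(\alpha_i) : i \in A \triangle A'\} = \{\alpha_i : i \in A \triangle A'\}$, contradicting the non-permutation condition $f(\alpha_i) \neq \alpha_j$ for all $i,j$. Hence the $g_A f g_A^{-1}$ are pairwise distinct, proving the conjugacy class in $G$ is uncountable and $f \notin \F_G$.

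The main obstacle is the reverse direction, and it is twofold: first, simultaneously arranging disjointness, local finiteness, and the non-permutation condition $f(\alpha_i) \neq \alpha_j$; and second, dealing with the fact that the natural multi-twists $h_A$ need not lie in the finite-index subgroup $G$. The latter is what forces the pigeonhole step and the indirect comparison via $g_A = h_A h_{A_0}^{-1}$ rather than working directly with $h_A$.
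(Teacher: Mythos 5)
Your proof is correct and its core strategy coincides with the paper's: both directions rest on the same two pillars, namely that there are only countably many finitely supported mapping classes, and that an element without finite support admits an infinite, pairwise disjoint, locally finite family of moved curves $\alpha_i$ with $f(\alpha_i)\neq\alpha_j$ for all $i,j$, from which uncountably many multitwist conjugators are built.

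The two arguments diverge in how they execute the uncountability step, and the differences are worth recording. To land the conjugators inside the finite-index subgroup $G$, the paper chooses exponents $k_i$ with $T_{a_i}^{k_i}\in G$ and works with the sign-sequence products $\phi_\epsilon=\prod_i T_{a_i}^{\epsilon_i k_i}$, taking for granted that these infinite products lie in $G$; you instead form the multitwists $h_A$ for $A\subset\N$ inside the ambient group and use the pigeonhole principle on the finitely many cosets of $G$ to extract an uncountable family of differences $g_A=h_Ah_{A_0}^{-1}\in G$. Your route buys a cleaner justification of $G$-membership at the cost of an indirect comparison through a basepoint $A_0$. For distinctness of the conjugates, the paper verifies nontriviality by an explicit product identity expressing a conjugate of $f_\epsilon f_{\epsilon'}^{-1}$ as a visibly nontrivial product of twists about the $a_i$ and $b_i=f(a_i)$ (which is why it also arranges $a_i$ and $b_j$ to be disjoint for $i\neq j$); you instead argue that $g_{A'}^{-1}g_A$ is a multitwist whose supporting multicurve would have to be $f$-invariant, contradicting $f(\alpha_i)\neq\alpha_j$. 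This is a correct and slightly slicker argument, but note that it leans on the fact that a locally finite, possibly infinite multitwist with nonzero exponents determines its supporting multicurve --- a mild extension of Lemma~\ref{lem:twists}(\ref{twist-equal}), which as stated concerns finite multicurves with a uniform power; you should also fix a nonzero value of the exponent $N$, which you never specify. Neither point is a genuine gap.
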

\begin{proof}
Assume f does not have finite support. Then there exists a curve $a_1$ such that $f(a_1) \ne a_1$. Now suppose we have chosen distinct disjoint curves $a_1, \ldots a_n$ such that, for every $1 \leq i \leq n$, $b_i = f(a_i)$ is distinct from all $a_j$ and so that the curves $a_i$ and $b_j$ are disjoint except possibly when $i=j$. Then take a finite-type domain that contains the curves $a_i$, $b_i=f (a_i)$, and $f^{-1}(a_i)$ for $i =1,\ldots,n$. Since $f$ has infinite support, we can find new curve $a_{n+1}$ outside of $Y$ that is not fixed. By induction, we thus get an infinite list of curves $a_i$ not fixed by $f$, with the property that all $a_i$ and $b_j=f(a_j)$ are distinct and disjoint except maybe when $i=j$. 

Since $G$ has finite-index in $\Gamma$, for each $i$ we may choose $k_i\ge 1$ so that $T_{a_i}^{k_i}\in G$. For each sequence $\epsilon = (\epsilon_1,\epsilon_2\dotsc)$ with $\epsilon_i\in \{1,-1\}$, we consider the infinite product
\[\phi_\epsilon = \prod_i T_{a_i}^{\epsilon_i k_i}\in G.\]
The associated conjugates $f_\epsilon = \phi_\epsilon^{-1} f \phi_\epsilon$ are then all distinct. Indeed, if $\epsilon' = (\epsilon'_1,\epsilon'_2,\dotsc)$, then our choice of $a_i$ and $b_i = f(a_i)$ allows us to easily observe that
\[
\phi_{\epsilon'}(f_\epsilon f_{\epsilon'}^{-1})\phi_{\epsilon'}^{-1} 
= \phi_{\epsilon'} \phi_{\epsilon}^{-1} (f \phi_{\epsilon} f^{-1}) (f\phi_{\epsilon'}^{-1} f^{-1})
= \prod_{\{i \mid \epsilon_i \ne \epsilon'_i\}} T_{a_i}^{(\epsilon'_i - \epsilon_i)k_i} T_{b_i}^{\sigma(f)(\epsilon_i - \epsilon'_i)k_i}
\]
is nontrivial when $\epsilon\ne\epsilon'$. Therefore the conjugacy class of $f$ in $G$ is uncountable.

Conversely, every finitely supported mapping class may be written as a finite product of Dehn twists and half-twists (see, e.g., \cite[Corollary 4.15]{Primer}). As there are only countably many curves, it follows that $\Map(S)$ has only countably many finitely supported elements. Therefore, when $f$ has finite support, its conjugacy class in $G$ is countable.
\end{proof}

Given an element $f\in G$, we write 
\[
C_G(f) = \big\{g\in G \mid gf = fg \big\}\le G
\]
for the centralizer of $f$ and write $Z\big(\F_G\cap C_G(f)\big)$ for the center of the subgroup $\F_G\cap C_G(f)$. The following notation will help us algebraically identify twists:

\begin{definition}
\label{def:set-M_G}
Write $\mathcal{M}_G\subset G$ for the set of of elements $f\in G$ satisfying
\begin{enumerate}
\item\label{MG-finite_support} $f\in \F_G$,
\item\label{MG-cyclic_center} $Z\big(\F_G\cap C_G(f)\big)$ is infinite cyclic, and
\item\label{MG-stable_centralizer} $C_G(f) = C_G(f^k)$ for all $k\ge 1$.
\end{enumerate}
For each $f\in \mathcal{M}_G$, set $\left(\mathcal{M}_G\right)_f = \{h\in \mathcal{M}_G \mid fh = hf\}$.
\end{definition}

\begin{lemma}
\label{lem:annular_structure}
Let $f\in G$ be annular and consider the twist $T_\alpha$ (i.e., Dehn twist or half-twist) about the curve $\alpha= \partial f$.
Then 
\[
Z\big(\F_G\cap C_G(f)\big) = \langle T_\alpha\rangle \cap G\cong \Z
\qquad\text{and}\qquad
C_G(f) = C_G(f^k)
\]
for each $k\ge 1$. In particular, $f\in \langle T_\alpha\rangle$ and furthermore $f\in \M_G$.
\end{lemma}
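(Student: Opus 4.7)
The plan is to first pin down $f$ structurally as a nontrivial power of $T_\alpha$, and then derive all of the required algebraic properties from Lemma~\ref{lem:twists}.

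The first step is to show $f\in \langle T_\alpha\rangle\cap G$. The annular hypothesis says $f$ has finite support, fixes $\alpha$, and fixes every curve of $S$ disjoint from $\alpha$. Applying Theorem~\ref{thm:alexander} to each component of $S\setminus\alpha$ that carries essential curves, I conclude that $f$ is represented by a homeomorphism supported in an annular neighborhood of $\alpha$ together with the adjacent thrice-punctured-sphere component in case $\alpha$ is a pants curve. A direct analysis then identifies the subgroup of mapping classes supported there and fixing $\alpha$ setwise with $\langle T_\alpha\rangle$: this subgroup is $\langle D_\alpha\rangle$ in the non-pants case; in the pants case every simple closed curve inside the thrice-punctured sphere is isotopic in $S$ either to $\alpha$ or to an end of $S$, so the only extra element beyond $\langle D_\alpha\rangle$ is the half-twist $H_\alpha$, giving $\langle H_\alpha\rangle$.

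Next I compute centralizers and the center algebraically. Lemma~\ref{lem:twists}(\ref{twist-conjugate}) combined with (\ref{twist-equal}) shows that, for any nonzero $m$, an element $g\in G$ centralizes $T_\alpha^m$ if and only if $g(\alpha)=\alpha$ and $\sigma(g)=1$. Since $f$ is a nontrivial element of $\langle T_\alpha\rangle$, it has infinite order, and each $f^k$ with $k\ge 1$ is itself a nontrivial power of $T_\alpha$, so this criterion yields $C_G(f)=C_G(f^k)$ for all $k\ge 1$, verifying condition (\ref{MG-stable_centralizer}) of Definition~\ref{def:set-M_G}. The same criterion shows every $T_\alpha^k\in G$ commutes with $C_G(f)$, so $\langle T_\alpha\rangle\cap G\subseteq Z(\F_G\cap C_G(f))$; since $D_\alpha\in\Gamma$ and $G$ has finite index in $\Gamma$, some nontrivial $T_\alpha^N$ lies in $G$, making this intersection infinite cyclic. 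For the reverse inclusion, fix $g\in Z(\F_G\cap C_G(f))$. For any curve $\beta$ disjoint from $\alpha$, a suitable power $T_\beta^M\in G$ commutes with $T_\alpha$ by Lemma~\ref{lem:twists}(\ref{twist-commute}) and has finite support, so $T_\beta^M\in \F_G\cap C_G(f)$; centrality then forces $gT_\beta^M g^{-1}=T_\beta^M$, hence $g(\beta)=\beta$ by Lemma~\ref{lem:twists}(\ref{twist-conjugate},\ref{twist-equal}). Likewise $g(\alpha)=\alpha$, and $g$ has finite support by Proposition~\ref{proposition:characterization of finitely supported elements}. Applying the first step to $g$ places $g\in \langle T_\alpha\rangle\cap G$, completing the reverse inclusion and establishing (\ref{MG-cyclic_center}). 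With (\ref{MG-finite_support}) immediate from Proposition~\ref{proposition:characterization of finitely supported elements}, we conclude $f\in \M_G$.

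The main obstacle is the structural analysis of the first step, especially when $\alpha$ is a pants curve: one must carefully identify the group of mapping classes supported in the thrice-punctured-sphere plus annular neighborhood of $\alpha$ that fix $\alpha$ setwise, and in particular verify that no exotic permutation of the three ends can occur. Once this identification is complete, all remaining claims follow formally from Lemma~\ref{lem:twists} and the characterization of $\F_G$.
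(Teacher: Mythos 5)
Your proof follows essentially the same route as the paper's: first pin down $f$ as a nontrivial power of $T_\alpha$ by applying the Alexander method to the components of $S\setminus\alpha$ (treating the pants case separately via the half-twist), then compute $C_G$ of twist powers using Lemma~\ref{lem:twists}(\ref{twist-conjugate}) and establish both inclusions for $Z\big(\F_G\cap C_G(f)\big)$ by commuting with powers of twists about curves disjoint from $\alpha$, exactly as in the paper. The only caveat is that Theorem~\ref{thm:alexander} is stated for infinite-type surfaces, so for finite-type components of $S\setminus\alpha$ you must invoke the finite-type Alexander method instead --- the paper does this explicitly by citing ``Alexander's method in its finite and infinite versions'' --- but this does not change the argument.
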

\begin{proof}
Choose $j\ge 1$ so that $T_\alpha^j$ generates $\langle T_\alpha\rangle \cap G$. First observe that because $f$ is annular, it is a power of $T_\alpha$. Indeed, if $\alpha$ is not a pants curve, then according to Alexander's method in its finite and infinite versions (see Theorem~\ref{thm:alexander}), $f$ is homotopic to the identity on each component of $S\setminus\alpha$, thus $f$ is a non-zero power of $D_\alpha$; if $\alpha$ is a pants curve, then $f$ is homotopic to the identity on one component of $S\setminus\alpha$, and the other component is a three punctured sphere, on which $f$ is either homotopic to the identity or $f$ is a non-zero power of a half-twist. In both cases, $f = T_\alpha^{jm}$ for some $m\in \Z\setminus\{0\}$.

By Lemma~\ref{lem:twists}(\ref{twist-conjugate})
 we have for each $k\in \Z\setminus\{0\}$ that
\[
C_G(T_\alpha^j) 
= \big\{g\in G \mid \text{$g(\alpha) = \alpha$ and $g$ preserves orientation}\big\} 
= C_G(T_\alpha^{jk}).\]
Therefore 
\[
C_G(f) = C_G(T_\alpha^{jm}) = C_G(f^k)
\] 
for each $k\ge 1$. 
Since $T_\alpha^j\in \F_G\cap Z(C_G(f))$, we clearly have 
\[
T_\alpha^j\in Z\big(\F_G\cap C_G(f)\big).
\]
Conversely, let $g\in Z\big(\F_G\cap C_G(f)\big)$ be nontrivial. 
Then $g$ has finite support by Proposition~\ref{proposition:characterization of finitely supported elements}. 
If $g$ is not annular with $\partial g = \alpha$, then by definition there is a curve $\beta$ in $S\setminus\alpha$ with $g(\beta)\ne \beta$. But then $D_\beta^i\in \F_G\cap C_G(f)$ for some $i$ by the above and $g D_\beta^i g^{-1}\ne D_\beta^i$ by Lemma~\ref{lem:twists}; contradicting our choice of $g$. Therefore $g$ must be annular with $\partial g = \alpha$; by the above this implies $g\in \langle T_\alpha^j\rangle$ and so proves $Z(\F_G\cap C_G(f)) = \langle T_\alpha^j\rangle\cong \Z$.
\end{proof}

\begin{lemma}\label{lem:multi-annular}
Let $f$ be an element of $G$. If $f\in \M_G$, then $f$ is multi-annular.
\end{lemma}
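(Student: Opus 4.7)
The plan is to verify the three defining properties of multi-annular for any $f\in\M_G$: finite support (immediate from condition~(\ref{MG-finite_support}) of Definition~\ref{def:set-M_G} via Proposition~\ref{proposition:characterization of finitely supported elements}), that $f$ fixes each component of $\partial f$, and that $f$ fixes every curve disjoint from $\partial f$. By Lemma~\ref{lem:canonical_curves}, $\partial f=\{\beta_1,\ldots,\beta_n\}$ is a nonempty finite multicurve, and $f$ is orientation-preserving by Lemma~\ref{lem:orientation}.

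First I would show $f$ fixes each $\beta_i$ by exploiting condition~(\ref{MG-stable_centralizer}). Since $f$ permutes the finite set $\partial f$, some power $f^N$ fixes each $\beta_i$; choosing $k_i\ge 1$ so that $T_{\beta_i}^{k_i}\in G$, Lemma~\ref{lem:twists}(\ref{twist-conjugate}) gives $T_{\beta_i}^{k_i}\in C_G(f^N)$, and condition~(\ref{MG-stable_centralizer}) then promotes this to $T_{\beta_i}^{k_i}\in C_G(f)$, whence $f(\beta_i)=\beta_i$ by Lemma~\ref{lem:twists}(\ref{twist-equal}). The same argument applied to any $\gamma$ disjoint from $\partial f$ with $f(\gamma)\ne\gamma$ rules out the case of finite $f$-orbit for $\gamma$; so the only remaining obstacle to multi-annularity is such a $\gamma$ with \emph{infinite} $f$-orbit.

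To rule this out I would invoke condition~(\ref{MG-cyclic_center}). First observe that both $f$ and $T_{\partial f}^K:=\prod_i T_{\beta_i}^K$ (for $K$ a common multiple of the $k_i$) lie in $Z(\F_G\cap C_G(f))$: the former trivially, and the latter because every $h\in\F_G\cap C_G(f)$ is orientation-preserving (Lemma~\ref{lem:orientation}) and preserves $\partial f$ setwise---indeed $h$ preserves the canonically defined set $\mathcal{O}(f)$, and $\partial f$ is distinguished within $\mathcal{O}(f)$ by disjointness---so Lemma~\ref{lem:twists}(\ref{twist-conjugate}) gives $hT_{\partial f}^Kh^{-1}=T_{h(\partial f)}^K=T_{\partial f}^K$. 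By condition~(\ref{MG-cyclic_center}), $Z(\F_G\cap C_G(f))=\langle z\rangle$ is infinite cyclic, so $f=z^a$ and $T_{\partial f}^K=z^b$ for some nonzero $a,b\in\Z$. Now restrict to the component $W$ of $S\setminus\partial f$ containing $\gamma$ via the natural cutting homomorphism to $\Map(W)$, after passing to suitable powers of $f$ and $z$ that fix $W$ setwise and $\partial f$ pointwise. The crucial facts are that $T_{\partial f}^K$ restricts to the identity in $\Map(W)$---each $T_{\beta_i}$ becomes a Dehn twist about a curve homotopic to an end of the open surface $W$, which is trivial in $\Map(W)$ by Theorem~\ref{thm:alexander}---while $f$ restricts to an element of infinite order because $\gamma$'s orbit in $W$ is infinite. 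The induced relations $(z|_W)^a=f|_W$ and $(z|_W)^b=\Id$ with $a,b\ne 0$ then force $z|_W$ to be simultaneously of infinite and of finite order, the desired contradiction.

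The hard part will be setting up the restriction to $W$ rigorously: one must pass to appropriate powers so that $f$ and $z$ fix $W$ setwise and $\partial f$ pointwise (legitimate since each has finite support and thus permutes only finitely many components of $S\setminus\partial f$), define the cutting homomorphism on the subgroup of mapping classes fixing $\partial f$ pointwise, and carefully verify that Dehn twists about curves of $\partial f$ become trivial in $\Map(W)$ (using that such curves are homotopic into ends of $W$). A secondary subtlety is justifying the centrality of $T_{\partial f}^K$, for which the canonical characterization of $\partial f$ in terms of $\mathcal{O}(f)$ is essential.
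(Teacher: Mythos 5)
Your proof is correct, and its first half --- finite support via Proposition~\ref{proposition:characterization of finitely supported elements}, nonemptiness of $\partial f$ via Lemma~\ref{lem:canonical_curves}, and the use of condition~(\ref{MG-stable_centralizer}) to promote ``$f^N$ fixes $\beta_i$'' to ``$f$ fixes $\beta_i$'' --- is exactly the paper's argument. Where you diverge is in handling curves disjoint from $\partial f$ with infinite $f$-orbit. You and the paper share the key observation that both $f$ and a power $T_\alpha^n$ of the multitwist about $\alpha=\partial f$ lie in the infinite cyclic group $Z\big(\F_G\cap C_G(f)\big)$ (justified the same way: every $g\in\F_G\cap C_G(f)$ is orientation-preserving and satisfies $g(\partial f)=\partial(gfg^{-1})=\partial f$), hence satisfy a relation $f^m=T_\alpha^n$. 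But the paper then finishes in one line by reusing condition~(\ref{MG-stable_centralizer}): for $\beta$ disjoint from $\alpha$, the twist $T_\beta^i\in G$ commutes with $T_\alpha^n=f^m$, so $T_\beta^i\in C_G(f^m)=C_G(f)$, forcing $f(\beta)=\beta$ by Lemma~\ref{lem:twists}. Your route instead restricts to the complementary component $W$ containing $\gamma$ and derives a contradiction from $z\vert_W$ having both finite and infinite order. This works, but it imports the cutting homomorphism and its well-definedness for (possibly infinite-type) open subsurfaces --- the ``hard part'' you flag --- which is avoidable: either finish as the paper does, or, if you keep your argument, sidestep the well-definedness issue entirely by replacing $\Map(W)$ with the action on the set of curves essential in $W$, where $T_{\partial f}$ acts trivially and $f^N$ has an infinite orbit, giving the same contradiction with no isotopy bookkeeping.
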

\begin{proof}
Since $f\in \F_G$, we know that $f$ has finite support and, by Lemma~\ref{lem:canonical_curves}, that $\alpha = \partial f$ is a nonempty multicurve. Consider the twist $T_\alpha$ about $\alpha$. Let $g\in \F_G\cap C_G(f)$ be arbitrary. Then $g$ preserves orientation (Proposition~\ref{proposition:characterization of finitely supported elements} and Lemma~\ref{lem:orientation}) and we have:
\[g(\partial f)=\partial(gfg^{-1})=\partial f.\]
Thus $g$ commutes with $T_\alpha$ by Lemma~\ref{lem:twists}(\ref{twist-conjugate}), showing that $T_\alpha$ is in $Z\big(\F_G\cap C_G(f)\big)$. 
Since $f\in Z\big(\F_G\cap C_G(f)\big)$ as well and this group is infinite cyclic by assumption, there necessarily exist $m,n \geq 1$ so that $f^m=T_\alpha^n$.

We claim that $f$ is multi-annular. First, to see that $f$ fixes each curve comprising $\alpha$, let $\gamma$ be one such curve and choose $k\ge 1$ so that $f^k(\gamma) = \gamma$; this is possible since $f$ permutes the finitely many curves of $\alpha$. Then $f^k$ commutes with $T_{\gamma}$ by Lemma~\ref{lem:twists}. Choosing $j\ge 1$ so that $T_\gamma^j\in G$, it follows that 
\[
T_\gamma^j\in C_G(f^k) = C_G(f).
\] 
But this is only possible if $f(\gamma) = \gamma$, as required.

It remains to show that $f$ fixes each curve disjoint from $\alpha$. Let $\beta$ be one such curve and choose $i\ge 1$ so that $T_\beta^i\in G$. Since $\beta$ and $\alpha$ are disjoint, we then have 
\[
T_\beta^i\in C_G(T_\alpha^n) = C_G(f^m) = C_G(f).
\] 
Hence, again by Lemma~\ref{lem:twists}, we have $f(\beta) = \beta$.
\end{proof}

\begin{proposition}
\label{propr:char_annular}
An element $f\in G$ is annular if and only if $f\in \M_G$ and $\left(\M_G\right)_f$ is a maximal (w.r.t. inclusion) member of the collection $\{\left(\M_G\right)_h\}_{h\in \M_G}$.
\end{proposition}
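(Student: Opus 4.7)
My plan is to exploit a structural characterization of $(\M_G)_f$ for multi-annular $f$, then deploy standard curve-intersection arguments. The cornerstone is the following key observation: for any $f \in \M_G$ with $\partial f = \alpha = \{\gamma_1,\dotsc,\gamma_k\}$,
\[
(\M_G)_f = \{g \in \M_G : g(\gamma_i) = \gamma_i \text{ for every } i\}.
\]
To establish this, recall $f^m = T_\alpha^n$ for some $m,n \geq 1$ (from the proof of Lemma~\ref{lem:multi-annular}). Condition~(3) of Definition~\ref{def:set-M_G} gives $C_G(f) = C_G(f^m) = C_G(T_\alpha^n)$, and Lemma~\ref{lem:twists}(\ref{twist-conjugate}) then shows $g \in (\M_G)_f$ is equivalent to $g \in \M_G$ and $g(\alpha) = \alpha$ as a set. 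Now any $g \in \M_G$ is multi-annular (Lemma~\ref{lem:multi-annular}) with some $\partial g = \zeta$, and Theorem~\ref{thm:alexander} applied to each component of $S \setminus \zeta$ forces $g$ to be isotopic to a product of Dehn and half-twists about the curves of $\zeta$. A standard intersection-number growth argument then gives that any curve transverse to $\zeta$ has infinite $g$-orbit. Hence every $\gamma_i$, whose $g$-orbit is contained in the finite set $\alpha$, must lie in $\zeta$ or be disjoint from $\zeta$, and in either case is fixed by $g$.

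For the forward direction, suppose $f$ is annular with $\partial f = \{\gamma\}$ and let $h \in \M_G$ with $(\M_G)_f \subseteq (\M_G)_h$ and $\partial h = \{\delta_1,\dotsc,\delta_l\}$. The key observation reduces the task to proving $\{\delta_i\} = \{\gamma\}$. For any curve $\eta \neq \gamma$ disjoint from $\gamma$ with $T_\eta^j \in G$, the annular element $T_\eta^j$ lies in $(\M_G)_f$ (it fixes $\gamma$), hence in $(\M_G)_h$, and so fixes each $\delta_i$ by the key observation applied to $h$; this forces $\eta$ to be disjoint from or equal to every $\delta_i$. If some $\delta_i \neq \gamma$, a subsurface construction on the infinite-type surface $S$ produces such an $\eta$ that in fact intersects $\delta_i$: one works in the component of $S \setminus \gamma$ containing $\delta_i$, or, if that component is a pair of pants with $\delta_i$ as a boundary, in the adjacent subsurface on the other side of $\delta_i$. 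This contradiction gives $\partial h = \{\gamma\}$, whence $(\M_G)_h = (\M_G)_f$ by the key observation.

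For the backward direction, suppose $f \in \M_G$ has $\partial f = \{\gamma_1,\dotsc,\gamma_k\}$ with $k \geq 2$; I will exhibit $h \in \M_G$ with $(\M_G)_f \subsetneq (\M_G)_h$. Let $h = T_{\gamma_1}^{j_1}$ where $j_1 \geq 1$ makes $T_{\gamma_1}^{j_1} \in G$; this is annular and so lies in $\M_G$ by Lemma~\ref{lem:annular_structure}. The key observation yields $(\M_G)_h = \{g \in \M_G : g(\gamma_1) = \gamma_1\}$ and makes the inclusion $(\M_G)_f \subseteq (\M_G)_h$ immediate, since every $g \in (\M_G)_f$ fixes each $\gamma_i$ and in particular $\gamma_1$. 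For strictness, pick a curve $\eta$ disjoint from $\gamma_1$ but intersecting $\gamma_2$ (easy to arrange on an infinite-type surface) and choose $j$ with $T_\eta^j \in G$. Then $T_\eta^j$ is annular, hence in $\M_G$; it fixes $\gamma_1$, so lies in $(\M_G)_h$; but $T_\eta^j(\gamma_2) \neq \gamma_2$, so the key observation excludes $T_\eta^j$ from $(\M_G)_f$. This contradicts the maximality of $(\M_G)_f$.

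The main obstacle is establishing the key observation, especially its infinite-orbit half. The reduction via Theorem~\ref{thm:alexander} on components of $S \setminus \zeta$ hands one a product of Dehn and half-twists, after which infinite orbits follow from standard intersection-number growth, with half-twists handled through the relation $H_\alpha^2 = D_\alpha$. The surface-topology constructions of the auxiliary curves $\eta$ are routine but require attention to pair-of-pants complementary components where no essential curve intersects a given boundary.
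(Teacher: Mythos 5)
Your proof is correct and follows essentially the same strategy as the paper's: both directions hinge on the facts that elements of $\M_G$ are multi-annular, that $f^m=T_{\partial f}^n$ makes $(\M_G)_f$ equal to the stabilizer of $\partial f$ in $\M_G$, and on the same auxiliary twists ($T_\eta^j$ for $\eta$ disjoint from $\gamma$ in the forward direction; $T_{\gamma_1}^{j}$ versus a curve meeting $\gamma_2$ but not $\gamma_1$ in the backward direction). Your ``key observation'' is just an explicit packaging of steps the paper distributes across its two cases; the only point worth tightening is that in the forward direction $\delta_i$ is disjoint from $\gamma$ because $T_\gamma^{k}\in(\M_G)_f\subseteq(\M_G)_h$ forces $i(\gamma,\delta_i)=0$, after which your pair-of-pants worry is vacuous since a curve $\delta_i\ne\gamma$ essential in $S$ and disjoint from $\gamma$ is essential in its component of $S\setminus\gamma$.
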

\begin{proof}
First suppose $f$ is annular and let $\alpha = \partial f$.  We have seen (Lemma~\ref{lem:annular_structure}) that $f\in \M_G$. Let $h\in \M_G$ be such that $\left(\M_G\right)_f \subset (\M_G)_h$. Let $\beta$ be any curve disjoint from $\alpha$ and choose $k\ge 1$ so that $T_\beta^k\in G$. Then $T_\beta^k\in \M_G$ and evidently $T_\beta^k\in (\M_G)_f$. By assumption, this gives $hT_\beta^k = T_\beta^kh$, thus $h(\beta) = \beta$ by Lemma~\ref{lem:twists}. Therefore $h$ fixes every curve disjoint from $\alpha$, proving that $h$ is annular with $\partial h = \alpha$. It now follows from Lemma~\ref{lem:annular_structure} that $f^m = h^n$ for some $m,n\in \Z$. Thus we may conclude the desired maximality of $(\M_G)_f$ by noting
\[(\M_G)_f = C_G(f)\cap \M_G = C_G(f^m=h^n)\cap \M_G = C_G(h)\cap \M_G = (\M_G)_h.\]

Next suppose $f\in \M_G$ and that $f$ is not annular. Then $\partial f$ contains two distinct curves $\delta$ and $\gamma$. Pick a curve $\beta$ that intersects $\delta$ but is disjoint from $\gamma$. Choose $k\ge 1$ so that $T_\gamma^k,T_\beta^k\in G$ and consequently $T_\gamma^k,T_\beta^k\in \M_G$. Let $h\in (\M_G)_f$ be arbitrary. Then $h(\partial f) = \partial f$ so we may choose a power $h^i$ that fixes each component of $\partial f$. In particular, we have $h^i(\gamma) = \gamma$ so that $hT_\gamma = T_\gamma h$. Thus $h\in (\M_G)_{T_\gamma^k}$ and we have proven 
\[
(\M_G)_f\subset (\M_G)_{T_\gamma^k}.
\] 
However, $T_\beta^k$ lies in $(\M_G)_{T_{\gamma}^k}$ (since $\gamma$ and $\beta$ are disjoint) but not in $(\M_G)_{f}$ (since, e.g., the orbit of $\delta\subset \partial f$ under $T_\beta^k$ is infinite). Thus $(\M_G)_f$ is not maximal.
\end{proof}

\begin{definition}[Generating twist]
\label{def:gen-twist}
Say that $f\in G$ is a \define{generating twist} of $G$ if 
\begin{enumerate}
\item  $f\in \F_G$, 
\item $Z(\F_G\cap C_G(f))$ is infinite cyclic and generated by $f$, 
\item $C_G(f) = C_G(f^k)$ for all $k\ge 1$, and 
\item $(\M_G)_f$ is maximal in the collection $\{(\M_G)_h\}_{h\in \M_G}$. 
\end{enumerate} 
Note that these are algebraic conditions in terms of the group structure of $G$.
\end{definition}

The following is a now consequence of Lemmas~\ref{lem:twists} and \ref{lem:annular_structure} and Proposition \ref{propr:char_annular}.

\begin{corollary}
\label{cor:generating-twists}
For each curve $\alpha$ of $S$ there is a unique $j_\alpha\ge 1$ so that $T_\alpha^{j_\alpha}$ and $T_\alpha^{-j_\alpha}$ are  generating twists of $G$. This assignment $\alpha \mapsto \{T_\alpha^{\pm j_\alpha}\}$ gives a bijection between curves and inverse pairs of generating twists under which two curves are disjoint if and only if their associated generating twists commute.
\end{corollary}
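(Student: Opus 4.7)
The plan is to combine the algebraic characterizations assembled in Section~\ref{sec:algchar}: Lemma~\ref{lem:annular_structure} and Proposition~\ref{propr:char_annular} together pin down which elements of $G$ can be generating twists, after which Lemma~\ref{lem:twists} reads off the bijection and the disjoint-iff-commute criterion. Given how much has already been done in the preceding results, this is mostly a bookkeeping step; the statement is essentially a repackaging of those results.

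For each curve $\alpha$ of $S$, I would let $j_\alpha\ge 1$ be the unique integer with $\langle T_\alpha\rangle\cap G = \langle T_\alpha^{j_\alpha}\rangle$, which exists since $G$ has finite index in $\Gamma$. To verify that $T_\alpha^{\pm j_\alpha}$ are generating twists (Definition~\ref{def:gen-twist}), note that $T_\alpha^{j_\alpha}$ is annular with $\partial(T_\alpha^{j_\alpha}) = \alpha$ by Lemma~\ref{lem:twists}(\ref{twist-annular}); conditions (1) and (3) then come directly from Lemma~\ref{lem:annular_structure}, which additionally yields $Z(\F_G\cap C_G(T_\alpha^{j_\alpha})) = \langle T_\alpha\rangle\cap G = \langle T_\alpha^{j_\alpha}\rangle$, giving condition (2); and condition (4) is precisely the content of Proposition~\ref{propr:char_annular}. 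Conversely, if $f\in G$ is any generating twist, conditions (1)--(3) place $f$ in $\M_G$, and maximality of $(\M_G)_f$ in (4) combined with Proposition~\ref{propr:char_annular} forces $f$ to be annular with $\alpha := \partial f$ a single curve. Lemma~\ref{lem:annular_structure} then places $f$ in $\langle T_\alpha\rangle\cap G = \langle T_\alpha^{j_\alpha}\rangle$, and condition (2) further pins $f$ down to be $T_\alpha^{\pm j_\alpha}$. This simultaneously gives uniqueness of $j_\alpha$ and surjectivity of the assignment $\alpha \mapsto \{T_\alpha^{\pm j_\alpha}\}$.

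For injectivity of the map, if $T_\alpha^{\pm j_\alpha} = T_\beta^{\pm j_\beta}$ then $\alpha = \beta$ by Lemma~\ref{lem:twists}(\ref{twist-equal}). For the final claim, if $\alpha$ and $\beta$ are disjoint then $T_\alpha^{j_\alpha}$ and $T_\beta^{j_\beta}$ commute by Lemma~\ref{lem:twists}(\ref{twist-commute}); conversely, if they commute, Lemma~\ref{lem:twists}(\ref{twist-conjugate}) (noting that powers of twists are orientation-preserving since they lie in $\PMap(S)$) together with Lemma~\ref{lem:twists}(\ref{twist-equal}) forces $T_\alpha^{j_\alpha}(\beta) = \beta$, which in turn requires $\alpha$ and $\beta$ to be disjoint (or equal) since a nontrivial power of $T_\alpha$ cannot fix any curve that intersects $\alpha$ essentially. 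The only mild point of care in the whole argument is matching the maximality condition in Definition~\ref{def:gen-twist}(4) verbatim with the hypothesis of Proposition~\ref{propr:char_annular}, which it does by design, so there is no genuine obstacle beyond assembling the pieces.
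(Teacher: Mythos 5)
Your proof is correct and follows exactly the route the paper intends: the paper gives no explicit proof of this corollary, stating only that it is a consequence of Lemma~\ref{lem:twists}, Lemma~\ref{lem:annular_structure}, and Proposition~\ref{propr:char_annular}, and your argument is precisely the assembly of those three results that the authors leave to the reader. The details you supply (defining $j_\alpha$ by $\langle T_\alpha\rangle\cap G=\langle T_\alpha^{j_\alpha}\rangle$, using condition (2) of Definition~\ref{def:gen-twist} to pin a generating twist down to $T_\alpha^{\pm j_\alpha}$, and handling powers in the disjointness criterion via parts (\ref{twist-conjugate}) and (\ref{twist-equal}) of Lemma~\ref{lem:twists}) are all accurate.
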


\section{Isomorphisms between big mapping class groups}
We may now easily prove our main results:

\begin{proof}[Proof of Theorem~\ref{thm:distinguish}]
For $i=1,2$ let $S_i$ be an infinite type surface and $G_i$ a finite-index subgroup of $\PMap(S_i)$ or $\Map(S_i)$. For each curve $\alpha$ of $S_1$, let $T_\alpha^{j_\alpha}$ be the associated generating twist from Corollary~\ref{cor:generating-twists}. Since generating twists are defined algebraically, they are preserved by the given isomorphism $\Phi\colon G_1\to G_2$. Therefore, for each curve $\alpha$ of $S$ we have
\begin{equation}
\label{eqn:twist-image}
\Phi(T_\alpha^{j_\alpha}) = T_{h(\alpha)}^{i_\alpha}\tag{$\ddagger$}
\end{equation}
 for some unique curve $h(\alpha)$ of $S_2$ and power $i_\alpha\in \Z\setminus\{0\}$. Since the isomorphism $\Phi$ preserves commutativity, Corollary~\ref{cor:generating-twists} ensures that $\alpha$ and $\beta$ are disjoint if and only if $h(\alpha)$ and $h(\beta)$ are disjoint. The assignment $\alpha\mapsto h(\alpha)$ thus extends to a simplicial automorphism $\CC(S_1)\to \CC(S_2)$ and is consequently, by Theorem~\ref{thm:curvecomplex}, induced by some homeomorphism $h\colon S_1\to S_2$. 

We show, for each $f\in G_1$, that
\[
\Phi(f) = h\circ f\circ h^{-1}\colon S_2\to S_2
\]
Following \cite[Section 3]{Ivanov97-IRMN}, for each $f\in G_1$ and curve $\alpha$ of $S_1$, \eqref{eqn:twist-image} and Lemma~\ref{lem:twists}(\ref{twist-conjugate}) give
\begin{align*}
\Phi(f T_\alpha^{j_\alpha} f^{-1}) 
= \Phi(f) \Phi(T_\alpha^{j_\alpha}) \Phi(f^{-1})
= \Phi(f) T_{h(\alpha)}^{i_\alpha} \Phi(f)^{-1} 
= T_{\Phi(f)(h(\alpha))}^{\sigma(\Phi(f))i_\alpha}
\end{align*}
and similarly
\begin{align*}
\Phi(f T_\alpha^{j_\alpha} f^{-1}) 
= \Phi(T_{f(\alpha)}^{j_\alpha})
= T_{h(f(\alpha))}^{i_{f(\alpha)}}.
\end{align*}
Since twists have a common power if and only if their supporting curves agree (Lemma~\ref{lem:twists}(\ref{twist-equal})), this proves $\Phi(f)(h(\alpha)) = h(f(\alpha))$ for all curves $\alpha$ and all $f\in G_1$. Applying this with $\alpha = h^{-1}(\beta)$, we conclude that 
\[
\Phi(f)(\beta) = h\circ f \circ h^{-1}(\beta)
\] 
for every curve $\beta$ of $S_2$. Therefore $\Phi(f) = h \circ f \circ h^{-1}$ by Theorem~\ref{thm:alexander}, as claimed.
\end{proof}

\begin{proof}[Proof of Corollary~\ref{cor:autmap}]
For (\ref{maincor-Aut}), let $\hat{\iota}\colon \Aut(\Map(S))\to\Comm(\Map(S))$ be the natural map sending an automorphism to its equivalence class of commensurations, and let 
\[
\iota\colon \Map(S)\to \Aut(\Map(S))
\]
be the homomorphism sending $f$ to \mbox{$g\mapsto f g f^{-1}$}. If $f\in \ker(\hat{\iota}\circ\iota)$, then there is a finite index subgroup $G\le\Map(S)$ such that $\iota(f)\vert_G$ is the identity. Then for every curve $\alpha$ we may choose $n\ge 1$ so that $T^n_\alpha\in G$ and consequently
\[
T^n_\alpha = \iota(f)(T^n_\alpha)= f T^n_\alpha f^{-1} = T^{n\sigma(f)}_{f(\alpha)}.
\]
Thus $f$ is trivial by Lemma~\ref{lem:twists}(\ref{twist-equal}) and Theorem~\ref{thm:alexander}, showing that $\hat{\iota}\circ\iota$ is injective. On the other hand, for each isomorphism $\Phi\colon G\to G'$ of finite-index subgroups, Theorem~\ref{thm:distinguish} provides $h\in \Map(S)$ so that $\Phi = \iota(h)\vert_{G}$, showing that $\iota$ and $\hat{\iota}\circ\iota$  are surjective as well. For (\ref{maincor-characteristic}), since every automorphism of $\Map(S)$ is inner, the normality of these subgroups implies they are characteristic. For (\ref{maincor-out}), Theorem~\ref{thm:distinguish} gives a surjection
\[N(G)/G\to \Aut(G)/\mathrm{Inn}(G) = \Out(G),\]
where $N(G)$ is the normalizer of $G$ in $\Map(S)$. Thus when $G$ has finite-index, $[N(G):G]$ and $\Out(G)$ are finite. Finally, (\ref{maincor-finiteindex}) is a special case of Theorem~\ref{thm:distinguish}.
\end{proof}

\bibliographystyle{alphanum}
\bibliography{bigMap}

\end{document}